
\documentclass[11pt,leqno,psfig,openbib]{amsart}
\usepackage{amssymb,amsmath,graphicx,amsfonts,euscript}
\usepackage{fancyhdr}
\usepackage{epsfig}
\usepackage{setspace}
\usepackage{dsfont}
\usepackage[red]{xcolor}
\usepackage{vmargin}
\usepackage{bbm}

\if@secthm
 \newtheorem{thm}{Theorem}[section]
\else
 \newtheorem{thm}{Theorem}
\fi
\newtheorem{rem}{Remark}[section]

\newtheorem{prop}{Proposition}[section]




\setlength{\parindent}{0.2in}
\setcounter{figure}{0} \setcounter{equation}{0}

\numberwithin{equation}{section}
\numberwithin{thm}{section}

\title{Global existence for fully nonlinear reaction-diffusion systems 
describing multicomponent reactive flows}
\author{Martine Marion$^1$ and Roger Temam$^2$}

\address{$^1$ MM:  Universit\'e de Lyon, Ecole Centrale de Lyon, CNRS UMR 5208, DMI, 36 avenue Guy de Collongue, 69134 Ecully Cedex, France.}
\email[Martine Marion]{Martine.Marion@ec-lyon.fr}
\address{$^2$ RT:  The Institute for Scientific Computing and Applied Mathematics, Indiana University, 831 East Third Street, Bloomington, Indiana  47405, U.S.A.}
\email[Roger Temam]{temam@indiana.edu}

\subjclass{35K55, 35A01, 35D30, 35Q35, 35Q79}
\keywords{Combustion, Multicomponent reactive flows, Non linear diffusion laws, Reaction-diffusion systems, Existence of solutions}

\begin{document}
\pagestyle{plain}
\maketitle
\baselineskip=14truept

\begin{abstract}
We consider combustion problems in the presence of complex chemistry and nonlinear diffusion laws leading to fully nonlinear multispecies reaction-diffusion equations.  We establish results of existence of solution and maximum principle, i.e. positivity of the mass fractions, which rely on specific properties of the models.  The nonlinear diffusion coefficients are obtained by resolution of the so-called Stefan-Maxwell equations.
\end{abstract}



\section{Introduction}\label{s1}

In this paper we investigate some mathematical issues arising in the context of the coupling of multi-species exothermic chemical reactions to fluid motion.  The physical paradigm for
this problem is combustion.  Another related important problem is that of multi-species
endothermic chemical reactions, with applications for instance to the chemistry of the high atmosphere; this problem will be studied elsewhere, and we concentrate here on
exothermic chemical reactions and combustion.

Mathematical models for multi-species chemical reactions almost exclusively deal with the special case of chemical species whose binary diffusion coefficients are constants all equal to one another.  For it is only in that
case that the coefficients of the Laplacians in the reaction-diffusion equations are simply those diffusion constants; see for instance \cite{MMT93} and the references therein.

In the present article we are concerned with the more general case, more physically relevant, for which the binary coefficients differ from pair to pair, the constraint of momentum conservation, i.e.
 the vanishing of the sum of diffusion fluxes, leading to inescapable nonlinear coefficients associated with the second spatial derivatives in the reaction-diffusion equations governing
 the evolution of the chemical species.  The situation is further complicated by the fact that here the linear relationship between the diffusion velocities of the various species and
 the concentration gradients of those species is given by the resolution of a singular linear system expressing the so-called Stefan-Maxwell equations \cite{Max67}, \cite{Ste71}, \cite{BSL07}, \cite{Wil88}.

As a result, it is not all clear if the equations governing the evolution of the various chemical species, yield solutions that are physically meaningful as well as mathematically sound.
Such questions as boundedness, positive invariance and existence deserve to be addressed.  It is that which is the subject of this article.  The connection with the motion induced by
these more general exothermic reactions is examined as well.

\vskip0.1in

From the mathematical viewpoint the system of equations that we consider in space dimension $n=2$ or $3$ consists of the following:

- the Navier-Stokes equations for incompressible fluid corresponding to pressure and velocity $p,\boldsymbol{v},$ for the mixture,

- the heat equation for the temperature $\theta$ with a heat source term corresponding to the Arrhenius law,

- the evolution (conservation) equations for the mass fractions $Y_1,\ldots ,Y_N,$ of the $N$ species.


The boundary value problems that we study correspond to reasonable boundary conditions for a flame propagating upward in a vertical tube but it
is clear that other related boundary value problems can be studied by similar methods.

As indicated before, the diffusion terms in these equations are nonlinear; for each of these equations it is a combination of $\boldsymbol{\nabla} Y_1,\dots ,\boldsymbol{\nabla} Y_N$, with coefficients
rational functions of $Y_1, \ldots ,Y_N.$  These functions are not given explicitly; they are instead given by the resolution of the Stefan-Maxwell equations.  In  Section 3 we derive
enough information on these coefficients to be able to conduct our theoretical study.  The first rigorous mathematical study of the Maxwell-Stefan linear system can be found, to the best of our knowledge, in \cite{Gio90}, \cite{Gio91} which mainly address questions of numerical computations ; see also 
 \cite{EG94}, \cite{EG97},  \cite{Gio99}, \cite{Lar91} for the numerical computation of the diffusion coefficients and \cite{WT62}, \cite{WU70} for the kinetic theory background. 


The article is organized as follows.  In Section 2 we describe the equations and the initial and boundary value problems and state the main results for the chemistry equations and for the
complete system corresponding to the coupling with hydrodynamics.  In Section 3 we study in details the Stefan-Maxwell equations considered as a singular linear algebraic system
for the diffusion velocities $\boldsymbol{V}_1, \ldots \boldsymbol{V}_N$ or the corresponding fluxes $\boldsymbol{F}_i$. We show there how to determine the diffusion fluxes $\boldsymbol{F}_i$ in terms of the mass fractions $Y_i$ and their gradients. These fluxes become singular when all $Y_i$ vanish, a case that it is necessary to handle in our mathematical investigation. A crucial tool in our approach is to  define modified diffusion coefficients that yield the proper fluxes for the actual solutions of the Stefan-Maxwell diffusion equations and that remain regular when all the mass fractions $Y_i$ tend to zero. Also we obtain enough information on the fluxes for our purpose and in particular to infer energy estimates from the equation for the Gibbs energy (see below). We conclude this section with explicit calculations for the relevant and interesting case of three species.
In Section 4  we prove the results previously stated for the reaction-diffusion equations alone, assuming that the velocity and temperature are given.
 For that purpose we approximate the equations by more regular ones ; these are equations for all mass fractions $Y_i$ treated as formally independent unknowns for which the positivity conditions are not imposed.  Afterwards we deduce that $Y_i\geq 0$ by using the maximum principle and show that
$\sum^N_{j=1} Y_j=1$. In order to pass to the limit, we then proceed with the fundamental energy estimate that results from the
principles of thermodynamics involving the functions log $Y_i$ (Gibbs energy) \cite{LL75}. This step requires a detailed study in particular due to the singularities in the log $Y_i$ - terms. Our estimate allows us to pass to the limit, solving the exact equations.  In Section 5 we couple the chemistry equations with the fluid and heat equations; we prove
the existence result for the complete (coupled) system using the same method of regularization.

The main results in this article were announced in the note \cite{MMT95}, and a draft was written which was not completed at that time.  After the passing away of Oscar Manley
in 2001, the two others authors regained interest in this work in relation with recent developments on the subject, (see e.g. \cite{Bot11}, \cite{BGS12}, \cite{JS13} and the references therein), and with possible applications to the chemistry of the atmosphere.  Additional noteworthy applications are listed in \cite{JS13}.

Concerning the mathematical analysis of the diffusion partial differential equations, local in time results can be found in  \cite{GM98a}, \cite{Bot11} while particular cases are considered in \cite{BGS12} ,\cite{GM98b} and \cite{Bot11}.  The general case is considered in \cite{JS13} where the existence  of solutions is derived for all time.  In fact in \cite{JS13} the results do not pertain to the usual (classical) system that we consider but to a formally equivalent system obtained in particular by {\em  assuming} that  $Y_i>0$ at all time.  Furthermore in \cite{JS13} the quantity that we call $Y_M$ below, $Y_M=\sum^N_{i=1}Y_i/M_i$ is required to be constant.  This assumption is licit when considering the isobaric isothermal case as done in \cite{JS13} but 
 not when coupling with hydrodynamics and combustion as we do here.  Finally, in the approach of \cite{JS13}, the symmetry between the mass fractions $Y_1,\ldots ,Y_N$ is broken by taking advantage of the relation $\sum^N_{i=1} Y_i=1$ and eliminating one of the mass fractions, and other changes of variables are performed.  Doing so the authors lose several structural properties of the system including the maximum principle for the mass fractions. On the contrary, a key point in our approach is to keep all the mass fractions, thus keeping the symmetry between the unknowns $Y_1,\ldots ,Y_N.$

\vskip0.1in

{\em This article is dedicated to the memory of Oscar Manley who suggested this work and who was actively involved in it, with kind memories and our great appreciation for his scientific vision and his tremendous scientific culture.}

\vskip0.1in

\section{The Equations and the Main Results}\label{s2}
\subsection{Description of the problem}\label{ss2.1}\hspace*{\fill} \\
We consider a multi-component premixed gas flame propagating in a bounded channel $\Omega\subset\mathbb{R}^n$, $n = 2$ or $3$.  We assume that $\Omega = (0,\ell)\times (0,h)$ if $n=2$ and
$\Omega = (0,\ell) \times (0,L) \times (0,h)$ if $n=3.$  We denote by $x=(x_1, x_2)$ or $(x_1,x_2,x_3)$ a generic point in $\mathbb{R}^2$ or $\mathbb{R}^3$ while
$\left\{\textbf{e}_1, \textbf{e}_2\right\}$ or $\left\{\textbf{e}_1, \textbf{e}_2,\textbf{e}_3\right\}$ denotes the canonical orthonormal basis where $\textbf{e}_n$
is parallel to the ascending vertical.  Under suitable assumptions (see \cite{Wil88} or \cite{MMT93}), and in particular assuming that the fluid is incompressible and using the Boussinescq approximation,
the equations for the reactive flow read
\begin{equation}\label{e2.1}
\displaystyle \frac{\partial\boldsymbol{v}}{\partial t} + (\boldsymbol{v}\cdot\boldsymbol{\nabla})\boldsymbol{v} - Pr \boldsymbol{\Delta}\boldsymbol{v} + \boldsymbol{\nabla} p = \textbf{e}_n\sigma\theta,
\end{equation}
\begin{equation}\label{e2.2}
div\enspace\boldsymbol{v} = 0,
\end{equation}
\begin{equation}\label{e2.3}
\displaystyle\frac{\partial\theta}{\partial t} + (\boldsymbol{v} \cdot\boldsymbol{\nabla})\theta - \Delta\theta = - \sum^N_{i=1} h_i\omega_i(\theta, Y_1, \ldots , Y_N),
\end{equation}
\begin{equation}\label{e2.4}
\displaystyle\frac{\partial Y_i}{\partial t} + (\boldsymbol{v}\cdot\boldsymbol{\nabla})Y_i +\boldsymbol{\nabla}\cdot \textbf{F}_i = \omega_i(\theta, Y_1, \ldots , Y_N),\enspace 1\leq i\leq N.
\end{equation}

The unknowns, which are here in non-dimensional form, are the velocity $\boldsymbol{v} = (v_1,v_2)$ or $(v_1,v_2,v_3)$, the pressure $p,$ the temperature $\theta$ and the mass fractions $Y_i$
of the $N$ species involved in the chemical reactions.  Furthermore $h_i, \sigma$ and $Pr$ (the Prandtl number) are positive constants.  The structure of the $\omega_i$ which are given
functions of $\theta,Y_1,\ldots, Y_N, $
is described below, in
\eqref{e2.15}-\eqref{e2.19}.  Naturally the mass fractions $Y_i$ are expected to satisfy the conditions
\begin{equation*}
Y_i\geq 0 \text{ for } 1\leq i\leq N,\quad \sum^N_{i=1} Y_i=1.
\end{equation*}

We now discuss the form of the fluxes $\textbf{F}_i.$  Our purpose is to study this problem in the case of complex multi-component diffusion laws.  The fluxes $\textbf{F}_i$ in \eqref{e2.4} read
\begin{equation}\label{e2.5}
\textbf{F}_i = Y_i\textbf{V}_i,
\end{equation}
where $\textbf{V}_i$ is the diffusion velocity of species $i,$ so that
\begin{equation}\label{e2.6}
\sum^N_{i=1}Y_i\textbf{V}_i = 0.
\end{equation}

Under general assumptions, the diffusion velocities are given (implicitly) in terms of the gradients of the mole fractions $X_i$ by the Stefan-Maxwell equations (see \cite{Max67}, \cite{Ste71}, \cite{BSL07}, \cite{Wil88}):
\begin{equation}\label{e2.7}
\boldsymbol{\nabla} X_i = \sum^N_{j=1, j\not= i} d_{ij}X_iX_j (\textbf{V}_j - \textbf{V}_i),\quad i= 1, \ldots , N,
\end{equation}
where $d_{ij} = \kappa/D_{ij}$ and $D_{ij} = D_{ji} > 0$ is the binary diffusion coefficient for species $i$ and $j$ while
$\kappa$ is the thermal diffusion coefficient, here taken to be a constant.  The resolution of \eqref{e2.7} is not straightforward
since this linear system (with respect to the $\textbf{V}_i$) has a singular matrix.  Also, these equations involve the $X_i$ while
equations \eqref{e2.4} concern the mass fractions $Y_i.$  The algebraic relations between the $X_i$ and the $Y_i$ are given in
\eqref{e2.14} below and in section \ref{s3} where we conduct a detailed study of the resolution of the Stefan-Maxwell equations.  It is
found there that the fluxes $\textbf{F}_i$ can be defined for arbitrary smooth (say $C^1$) functions $Y_i$ from $\Omega$ into $[0,+\infty)$, and
have the form
\begin{equation}\label{e2.8}
\textbf{F}_i = - \sum^N_{j=1} a_{ij}(Y_1,\ldots , Y_N)\boldsymbol{\nabla} Y_j, \quad\text{ for }i=1,\ldots ,N,
\end{equation}
with
\begin{equation}\label{e2.9}
\sum^N_{i,j=1}a_{ij}(Y_1,\ldots ,Y_N)\boldsymbol{\nabla} Y_j=0.\footnote{Property \eqref{e2.9} is valid even if $\sum^N_{j=1} Y_j \not= 1;$ see (\ref{e3.48}) in Section \ref{s3}.}
\end{equation}
The coefficients $a_{ij}$ are rational functions of $Y_1,\ldots ,Y_N$, continuous from $[0,+\infty)^N$ into $\mathbb{R}$ and such that, for
$i,j=1, \ldots ,N$,
\begin{equation}\label{e2.10}
\begin{array}{cc}
a_{ij}(Y_1, \ldots ,Y_N) = Y_ib_{ij}(Y_1,\ldots ,Y_N), \text{ for }i\not=j,\\
\\
 \text{ where }b_{ij}:[0,+\infty)^N\rightarrow\mathbb{R}\text{ is continuous, }
\end{array}
\end{equation}
\begin{equation}\label{e2.11}
\begin{array}{cc}
a_{ii}(Y_1,\ldots ,Y_N) = b^0_i(Y_1, \ldots ,Y_N) + Y_ib^1_i(Y_1,\ldots ,Y_N),\\
\\
\text{ where }b^0_i\text{ and }b^1_i:[0,+\infty)^N\rightarrow\mathbb{R}\text{ are continuous and }b^0_i(Y_1,\ldots , Y_N)\geq 0.
\end{array}
\end{equation}
\noindent Also the following property is proved to hold:  there exists a constant $c_1>0$ such that
\begin{equation}\label{e2.12}
\begin{array}{cc}
\text{if } Y_1,\ldots Y_N\in H^1(\Omega)\text{ are such that }0\leq Y_i(x)\leq 1\text{ and }\\
\\
\sum^N_{j=1} Y_j(x) = 1\text{ for a.e. }x\in\Omega,\text{ then }\\
\\
-\sum^N_{i=1}\textbf{F}_i\cdot\boldsymbol{\nabla}\mu_i\mathbbm{1}_{(Y_i>0)}\geq c_1\sum^N_{i=1}|\boldsymbol{\nabla} Y_i|^2,\text{ for a.e. }x\in\Omega,
\end{array}
\end{equation}
where $\mathbbm{1}_{(Y_i>0)}$ is the characteristic function of the set $\left\{x\in\Omega,\; Y_i(x)>0\right\}$ and $\mu_i =\mu_i(x)$ is only defined where $Y_i(x)>0$ (or equivalently $X_i(x)>0$) by:
\begin{equation}\label{e2.13}
\mu_i=\frac{1}{M_i}\log X_i,\text{ if }Y_i>0.
\end{equation}
Here $X_i$ is the mole fraction of species $i$ given by
\begin{equation}\label{e2.14}
X_i =\frac{Y_i}{M_iY_M} , \enspace Y_M = \sum^N_{j=1}\frac{Y_j}{M_j},\enspace M_j = \text{ molecular mass of species }j,
\end{equation}
and $\boldsymbol{\nabla}\mu_i$ is defined almost everywhere when $Y_i$ (or $X_i )> 0$ by $\boldsymbol{\nabla}\mu_i(x)=\boldsymbol{\nabla} X_i(x)/M_iX_i(x).$

We will study equations \eqref{e2.1}-\eqref{e2.4} using the above properties of the fluxes $\textbf{F}_i.$  We show in Section \ref{s3} how the properties
\eqref{e2.8}-\eqref{e2.12} can be actually proved for the fluxes $\textbf{F}_i$ given by \eqref{e2.5}-\eqref{e2.7}, or more precisely for suitably modified fluxes.

We now state the assumptions on the chemical rates $\omega_{i};\enspace \omega_i$ is the difference between the rate of production of species $i,\alpha_i =\alpha_i (\theta, Y_1, \ldots , Y_N)\geq 0,$ and the rate of removal of species $i$ ; the rate of removal of species $i$ is proportional to an integral power of $Y_i$ and we write it in the form $Y_i\beta_i(\theta, Y_1\ldots ,Y_N),$ with $\beta_i\geq 0.$  Hence:
\begin{equation}\label{e2.15}
\omega_i =\omega_i(\theta, Y_1,\ldots ,Y_N)= \alpha_i(\theta, Y_1, \ldots , Y_N) - Y_i\beta_i(\theta ,Y_1, \ldots , Y_N).
\end{equation}
We assume that the functions $\alpha_i$ and $\beta_i$ are defined for $\theta\geq 0$ and $0\leq Y_k\leq 1,$ are continuous on $\mathbb{R}_+\times [0,1]^N$ and that
\begin{align}\label{e2.16}
&\alpha_i(\theta, Y_1, \ldots , Y_N)\geq 0,\enspace \beta_i(\theta , Y_1,\ldots , Y_N)\geq 0\enspace \text{ for }\theta\geq 0,\enspace 0\leq Y_k\leq 1,\\
\label{e2.17}
&\sum^N_{i=1}\omega_i(\theta ,Y_1, \ldots , Y_N) = 0, \text{ for }\theta\geq 0, \enspace 0\leq Y_k\leq 1,\\
\label{e2.18}
&\alpha_i,\;\beta_i\text{ and hence } \omega_i\text{ are bounded on }[0,+\infty) \times [0,1]^N,\\
\label{e2.19}
&\sum^N_{i=1} h_i\omega_i(0,Y_1,\ldots ,Y_N)\leq 0,\enspace \text{ for }0\leq Y_k\leq 1.
\end{align}
Note that these abstract assumptions are satisfied by the rates given by the Arrhenius law.  See \cite{MMT93} for specific examples.
\vskip0.1in
Equations \eqref{e2.1}-\eqref{e2.4} are supplemented with appropriate boundary and initial conditions.  We have set
$\Omega = (0,\ell) \times  (0,h)$ for
$n=2$ and $\Omega = (0,\ell) \times  (0,L)\times (0,h)$ for $n=3.$  We assume that the flame propagates in the vertical $x_n$ direction, the premixed reacting species entering from below.  The vertical sides of the channel are adiabatically insulated and impervious to fluid flow.  We denote by $\Gamma_0$ and $\Gamma_h$ the parts of the boundary $\partial\Omega$ of $\Omega$ corresponding to $x_n=0$ and $x_n=h$ and we denote by $\Gamma_\ell$ the lateral boundary corresponding to $0<x_n<h.$ Consequently, the boundary conditions read
\begin{equation}\label{e2.20}
v_i=0\text{ on } \partial\Omega \text{ for } 1\leq i\leq n-1,  \enspace v_n=1\text{ on }\Gamma_0\cup\Gamma_h,  \enspace \frac{\partial v_n}{\partial\boldsymbol{\nu}} = 0\text{ on }\Gamma_\ell,
\end{equation}
\begin{equation}\label{e2.21}
\theta = 0\text{ on }\Gamma_0,\enspace 
\frac{\partial\theta}{\partial\boldsymbol{\nu}} = 0\text{ on }\Gamma_h\cup\Gamma_\ell,
\end{equation}
and for $1 \leq i \leq N$ :
\begin{equation}\label{e2.22}
\begin{cases}
&Y_i = Y^u_i\text{ on }\Gamma_0,\\
&\boldsymbol{\nu}\cdot\textbf{F}_i = 0 \text{ on }\Gamma_h\cup\Gamma_\ell,
\end{cases}
\end{equation}
that is for \eqref{e2.22}$_2$:
\begin{equation}\label{e2.22a}
\left(\sum^N_{j=1}a_{ij}(Y_1,\ldots ,Y_N)\boldsymbol{\nabla} Y_j\right)\cdot\boldsymbol{\nu}= 0\text{ on }\Gamma_h\cup\Gamma_\ell.
\end{equation}
Here $\boldsymbol{\nu} = (\nu_1\ldots ,\nu_n)$ is the unit outward normal on $\partial\Omega$ and $Y^u_i$, $1\leq i\leq N,$ is the concentration of the species $Y_i$ as it enters the channel (unburnt gas).  The $Y^u_i$ are assumed to be constant and satisfy
\begin{equation}\label{e2.23}
Y^u_i>0\enspace\forall i,\enspace \enspace\sum^N_{i=1}Y^u_i = 1.
\end{equation}
\vskip0.1in

Finally, we associate with \eqref{e2.1}-\eqref{e2.4} and \eqref{e2.20}-\eqref{e2.22}, the initial conditions
\begin{equation}\label{e2.24}
\boldsymbol{v} (x,0) = \boldsymbol{v}_0(x),\enspace\theta (x,0) =\theta_0(x),
\end{equation}
\begin{equation}\label{e2.25}
Y_i(x,0) = Y_{i,0}(x),
\end{equation}
where we assume that
\begin{equation}\label{e2.26}
\theta_0(x)\geq 0,
\end{equation}
\begin{equation}\label{e2.27}
Y_{i,0}(x)\geq 0,\quad\sum^N_{i=1} Y_{i,0}(x)=1.
\end{equation}

\subsection{Existence results}\label{ss2.2} \hspace*{\fill} \\
To state our existence results it is convenient to extend the domain of definition of the reaction rates $\omega_i$, $1\leq i\leq N$, to $\mathbb{R}^{N+1}$ by setting
\begin{equation}\label{e4.2}
\omega_i(\theta, Y_1,\ldots, Y_N) = \omega_i(\theta^+, \psi (Y_1), \ldots, \psi(Y_N)),\quad\theta\in\mathbb{R}, \quad Y_k\in\mathbb{R},
\end{equation}
where, for $s\in\mathbbm{R}$, $s^+ = \max (s,0)$ and:
$$\psi(s) = s\text{ if }0\leq s\leq 1, \psi(s) =  1\text{ if }s\geq 1,\psi(s) =  0\text{ if }s\leq 0.$$

We first consider the system \eqref{e2.4}, assuming that $\boldsymbol{v}$ and $\theta$ are given such that, for some $T>0$:
\begin{equation}\label{e2.28}
\begin{cases}
&\boldsymbol{v}\in L^\infty(0, T; L^2(\Omega)^n)\cap L^2(0,T; H^1(\Omega)^n),\\
&\boldsymbol{v}\text{ satisfies }\eqref{e2.2} \text{ and the Dirichlet boundary conditions in } \eqref{e2.20}.
\end{cases}
\end{equation}
\begin{equation}\label{e2.29}
\theta\in L^\infty(0,T;L^2(\Omega))\cap L^2(0,T;H^1(\Omega)).
\end{equation}
\vskip0.1in

The following existence result holds.
\begin{thm}\label{t2.1}
Under the assumptions \eqref{e2.8}-\eqref{e2.12}, \eqref{e2.15}-\eqref{e2.18}, \eqref{e2.23}, let $\boldsymbol{Y}_0=(Y_{i,0})_{1\leq i\leq N}$ be given in
$L^2(\Omega)^N$ such that \eqref{e2.27} holds for almost every $x$ in $\Omega,$ and let $\boldsymbol{v}$ and $\theta$ be given satisfying \eqref{e2.28} and
\eqref{e2.29}.  Then, problem \eqref{e2.4}, \eqref{e2.22}, \eqref{e2.25} possesses a solution $\textbf{Y}=(Y_i)_{1\leq i\leq N}$ such that
\begin{equation}\label{e2.30}
\boldsymbol{Y}\in L^\infty(0,T; L^2 (\Omega)^N)\cap L^2(0,T; H^1(\Omega)^N).\\
\end{equation}
Furthermore, we have
\begin{equation}\label{e2.31}
0\leq Y_i(x,t)\leq 1 \text{ and } \sum^N_{i=1} Y_i(x,t) = 1,\text{ for } t\in(0,T) \text{ and a.e. }x\in\Omega.
\end{equation}
\end{thm}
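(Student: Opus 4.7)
The plan follows the strategy outlined in the introduction: approximate, apply a maximum principle to recover positivity and the sum-to-one constraint, derive an entropy-type estimate, and pass to the limit. I would start from a regularized problem in which the mass fractions $Y_1^\varepsilon,\ldots,Y_N^\varepsilon$ are treated as formally independent functions on all of $\mathbb{R}$ (using the extended rates \eqref{e4.2}) and in which the degenerate diffusion matrix $(a_{ij})$ is replaced by the uniformly coercive perturbation $(a_{ij}+\varepsilon\delta_{ij})$. Coupled with the obvious modification of the boundary conditions \eqref{e2.22} and with the initial data \eqref{e2.25}, the perturbed system has bounded continuous reaction terms and an elliptic principal part, so existence of a solution $\boldsymbol{Y}^\varepsilon\in L^\infty(0,T;L^2(\Omega)^N)\cap L^2(0,T;H^1(\Omega)^N)$ follows from a standard Galerkin or Schauder fixed-point scheme.

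Once $\boldsymbol{Y}^\varepsilon$ is constructed, I would verify \eqref{e2.31} for the approximation. For positivity, I test the $i$-th equation against the negative part of $Y_i^\varepsilon$: since $\alpha_i\ge 0$ and the term $Y_i^\varepsilon\beta_i$ has the right sign after going through the truncation $\psi$, and since the off-diagonal coefficients $a_{ij}$ vanish where $Y_i=0$ by \eqref{e2.10}, a Stampacchia-type argument combined with the $\varepsilon$-coercivity yields $Y_i^\varepsilon\ge 0$, so the truncation $\psi$ may be dropped. Next, property \eqref{e2.9} valid for arbitrary gradients forces the pointwise identity $\sum_i a_{ij}(\boldsymbol{Y})=0$, hence $\sum_i\nabla\cdot\boldsymbol{F}_i^\varepsilon=\varepsilon\Delta\bigl(\sum_i Y_i^\varepsilon\bigr)$; together with \eqref{e2.17}, with $\mathrm{div}\,\boldsymbol{v}=0$ and with the compatible data \eqref{e2.23}, \eqref{e2.27}, the quantity $Z^\varepsilon=\sum_i Y_i^\varepsilon-1$ satisfies a linear parabolic equation with vanishing initial, boundary and source terms, hence $Z^\varepsilon\equiv 0$ and $0\le Y_i^\varepsilon\le 1$.

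The heart of the argument is the Gibbs-energy estimate that yields $\varepsilon$-uniform bounds. Multiplying the $i$-th equation by $\mu_i^\varepsilon=M_i^{-1}\log X_i^\varepsilon$, summing in $i$ and integrating over $\Omega$, one formally generates the time derivative of the convex entropy $\mathcal{S}(\boldsymbol{Y}^\varepsilon)=\sum_i\int_\Omega M_i^{-1}Y_i^\varepsilon\log X_i^\varepsilon$ on the left, while \eqref{e2.12} bounds the dissipation below by $c_1\sum_i\|\nabla Y_i^\varepsilon\|_{L^2}^2$; the convective term vanishes using $\mathrm{div}\,\boldsymbol{v}=0$ together with $\sum_i Y_i^\varepsilon=1$, while the reactive contribution is controlled via \eqref{e2.18} and the elementary bound $|Y\log Y|\le 1$. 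The main obstacle is making this computation rigorous in the presence of the logarithmic singularity when some $Y_i^\varepsilon$ vanishes; this is exactly what the factor $\mathbbm{1}_{(Y_i>0)}$ in \eqref{e2.12} is designed for. I would regularize $\log s$ by $\log(s+\eta)$, carry out the calculation on this smooth approximation, and send $\eta\to 0$ invoking \eqref{e2.12} together with monotone/dominated convergence for the entropy term.

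Armed with the $\varepsilon$-uniform bounds $0\le Y_i^\varepsilon\le 1$ in $L^\infty$ and $\nabla Y_i^\varepsilon$ in $L^2((0,T)\times\Omega)$, I would use \eqref{e2.4} together with the continuity and boundedness of the $a_{ij}$ on $[0,1]^N$ to bound $\partial_t Y_i^\varepsilon$ in $L^2(0,T;H^{-1}(\Omega))$, then apply Aubin--Lions to extract a subsequence converging strongly in $L^2((0,T)\times\Omega)^N$ and pointwise a.e. Continuity of $a_{ij}$ and $\omega_i$ then permits passage to the limit in the nonlinear terms of \eqref{e2.4} and in the boundary identity \eqref{e2.22a}; the constraints \eqref{e2.31} pass to the limit by pointwise convergence, and \eqref{e2.30} follows from weak lower semicontinuity, completing the proof.
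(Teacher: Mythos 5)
Your overall architecture (regularize, prove $Y_i\ge 0$ and $\sum_i Y_i=1$ for the approximation, derive the Gibbs-energy estimate with a $\log(s+\eta)$ regularization and the indicator in \eqref{e2.12}, then pass to the limit by compactness) is the same as the paper's. The genuine gap is in the regularization itself. You replace $(a_{ij})$ by $(a_{ij}+\varepsilon\delta_{ij})$ and call this ``uniformly coercive'', but nothing in \eqref{e2.8}--\eqref{e2.12} says that the quadratic form $\sum_{i,j}a_{ij}(\boldsymbol{Y})\xi_i\cdot\xi_j$ is nonnegative: the only coercivity available is the entropy-dissipation inequality \eqref{e2.12}, which pairs $\boldsymbol{F}_i$ with $\boldsymbol{\nabla}\mu_i$ (not with $\boldsymbol{\nabla}Y_i$) and is only valid once $0\le Y_i\le 1$ and $\sum_j Y_j=1$ are already known. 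For the approximate problem, before these constraints are established, the best one can say is $\bigl|\sum_{i,j}\int\hat a_{ij}\boldsymbol{\nabla}Y_j\cdot\boldsymbol{\nabla}Y_i\bigr|\le NK_1\int|\boldsymbol{\nabla}\boldsymbol{Y}|^2$ with $K_1$ a fixed (not small) constant; this cannot be absorbed by $\varepsilon\int|\boldsymbol{\nabla}\boldsymbol{Y}|^2$, so the basic $L^\infty(L^2)\cap L^2(H^1)$ a priori estimate --- and with it your Galerkin/Schauder existence step --- does not close. This is exactly why the paper regularizes with a $q$-Laplacian, $-\varepsilon\boldsymbol{\nabla}\cdot(|\boldsymbol{\nabla}\boldsymbol{Y}|^{q-2}\boldsymbol{\nabla}Y_i)$ with $q>2$: Young's inequality then gives $NK_1|\boldsymbol{\nabla}\boldsymbol{Y}|^2\le\tfrac{\varepsilon}{2}|\boldsymbol{\nabla}\boldsymbol{Y}|^q+C_\varepsilon$ and the estimate closes for each fixed $\varepsilon$ (the price being a monotonicity argument in the Galerkin limit, and the extra bound $\varepsilon\int|\boldsymbol{\nabla}\boldsymbol{Y}|^q\le C$ used to discard the regularizing term as $\varepsilon\to0$). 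To repair your scheme you would need either to prove positive semidefiniteness of $(a_{ij})$ in the original variables (not true in general for Maxwell--Stefan diffusion matrices, and neither assumed nor proved in the paper) or to adopt a regularization strong enough to dominate the indefinite part.

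Two smaller omissions. First, the coefficients of the approximate problem must also be cut off and extended to all of $\mathbb{R}^N$ (the $\hat a_{ij}$ of \eqref{e4.1}, evaluated at $Y_k^+$ and multiplied by $\xi(\sum_\ell|Y_\ell|)$) so that they are globally bounded; this is what produces the constant $K_1$ above and what makes your sign argument for $-Y_i^-$ legitimate when some $Y_j$ are negative. Second, in the entropy estimate the admissible test function is $\mu_i^\eta-\mu_i^{u,\eta}$, not $\mu_i^\eta$, since it must vanish on $\Gamma_0$; this is precisely where the hypothesis $Y_i^u>0$ from \eqref{e2.23} enters, and the convective term then produces a bounded boundary integral over $\Gamma_h$ rather than vanishing outright.
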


\begin{rem}\label{r2.1a}
To be more precise the solution $\boldsymbol{Y}$ in  the theorem \ref{t2.1} is a weak solution that satisfies the following variational formulation for $ 1\leq i\leq N$:
\begin{equation}\label{e2.31a}
\begin{split}
\bigg<\frac{\partial Y_i}{\partial t}, z_i\bigg> +\int_\Omega [(\boldsymbol{v}\cdot\boldsymbol{\nabla} )Y_i]&z_idx+\sum^N_{j=1}\int_\Omega a_{ij}(Y_1,\ldots ,Y_N)
\boldsymbol{\nabla} Y_j\cdot\boldsymbol{\nabla} z_idx\\
&= \int_\Omega \omega_i(\theta, Y_1,\ldots, Y_N)z_idx, \enspace \forall z_i \in H^{1}_{\Gamma_{0}} (\Omega), 
\end{split}
\end{equation}
where
\begin{equation*}
H^{1}_{\Gamma_{0}} (\Omega) = \left\{ z\in H^{1}(\Omega), \enspace z = 0\text{ at }x_n=0\right\},
\end{equation*}
and $<\cdot ,\cdot >$ denotes the duality product between $H^{1}_{\Gamma_{0}} (\Omega)$ and its dual. We infer from \eqref{e2.31a} that $\boldsymbol{Y}$ satisfies 
\begin{equation*}
\frac{\partial Y_i}{\partial t} \in L^2(0,T; H^{1}_{\Gamma_{0}} (\Omega)'), \text{ for } 1\leq i\leq N,\\
\end{equation*}
which together with \eqref{e2.30} guarantees that $\boldsymbol{Y}\in C([0,T]; L^2 (\Omega)^N)$.
\end{rem}

\vskip0.1in
We now consider the general system \eqref{e2.1}-\eqref{e2.4}.  The following existence result holds:
\begin{thm}\label{t2.2}
In space dimension $n=2$ or $3,$ under the assumptions \eqref{e2.8}-\eqref{e2.12}, \eqref{e2.15}-\eqref{e2.19}, \eqref{e2.23}, let
\begin{equation*}
\boldsymbol{v}_0\in L^2(\Omega)^n,\quad\theta_0\in L^2(\Omega),\quad \boldsymbol{Y}_0=(Y_{i,0})_{1\leq i\leq N}\in L^2(\Omega)^N
\end{equation*}
be given such that \eqref{e2.2}, \eqref{e2.26}, \eqref{e2.27} hold for almost every $x$ in $\Omega$ and
$$\boldsymbol{v}_0\cdot\boldsymbol{\nu} = v_n=1 \text{ on }\Gamma_0\cup\Gamma_h,
\enspace \boldsymbol{v}_0\cdot\boldsymbol{\nu} = 0 \text{ on }\Gamma_\ell.\footnote{Note that these conditions make sense because $div\enspace\boldsymbol{v}_0 = 0$ by \eqref{e2.2}, see  e.g. \cite{Tem77}.}$$
Then, for any $T>0,$ the problem \eqref{e2.1}-\eqref{e2.4}, \eqref{e2.20}-\eqref{e2.22}, \eqref{e2.24}-\eqref{e2.25} possesses a solution
$(\boldsymbol{v},\theta, \boldsymbol{Y})$ such that
\eqref{e2.28}, \eqref{e2.29}, \eqref{e2.30}, \eqref{e2.31} hold 
and
\begin{equation}\label{e2.33}
\theta(x,t)\geq 0 \text{ for } t\in(0,T) \text{ and a.e. }x\in\Omega.
\end{equation}
\end{thm}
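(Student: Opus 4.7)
The plan is to mirror the strategy of Section 4 but applied to the full coupled system: regularize the reaction-diffusion block, produce solutions of the regularized coupled problem via a fixed-point argument, derive a priori bounds uniform in the regularization parameter, and pass to the limit by compactness. Theorem \ref{t2.1} will guide the treatment of the chemistry block, while the coercivity \eqref{e2.12} together with the structural assumptions \eqref{e2.15}--\eqref{e2.19} on the reaction rates will drive the uniform estimates.

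First I would introduce regularized coefficients $a^\varepsilon_{ij}$ (as in Section 4) so that the resulting reaction-diffusion system is uniformly parabolic and the maximum-principle arguments of Section 4 still apply. For each $\varepsilon>0$ I would obtain an approximate triple $(\boldsymbol{v}^\varepsilon,\theta^\varepsilon,\boldsymbol{Y}^\varepsilon)$ as a fixed point of a map $\Phi$ defined on a closed convex bounded subset of $L^2(0,T;L^2(\Omega)^n)\times L^2(0,T;L^2(\Omega))\times L^2(0,T;L^2(\Omega)^N)$: given $(\hat{\boldsymbol{v}},\hat\theta,\hat{\boldsymbol{Y}})$, solve successively the linear Navier--Stokes equation for $\boldsymbol{v}$ with buoyancy driven by $\hat\theta$, the linear advection--diffusion equation for $\theta$ transported by $\boldsymbol{v}$ with source $-\sum_i h_i\omega_i(\hat\theta,\hat{\boldsymbol{Y}})$, and the regularized reaction-diffusion system for $\boldsymbol{Y}$ (via Theorem \ref{t2.1} with $a^\varepsilon_{ij}$ in place of $a_{ij}$). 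Continuity and compactness of $\Phi$, via Aubin--Lions, yield a fixed point by Schauder's theorem.

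Next I would establish the qualitative bounds for each $\varepsilon$. Positivity of $\theta^\varepsilon$ follows by testing the heat equation with $-(\theta^\varepsilon)^-$ and invoking \eqref{e2.19}; the conditions $0\leq Y^\varepsilon_i\leq 1$ and $\sum_i Y^\varepsilon_i=1$ follow from the Section 4 argument, using \eqref{e2.17} together with the maximum principle for the regularized chemistry system. For uniform-in-$\varepsilon$ a priori estimates, I would test the Navier--Stokes equation by $\boldsymbol{v}^\varepsilon$ (after subtracting a lift of the inhomogeneous Dirichlet data, cf. \cite{Tem77}), controlling the buoyancy source by $\|\theta^\varepsilon\|_{L^2}$; test the heat equation by $\theta^\varepsilon$ and use \eqref{e2.18} to bound the reaction source; and for the species employ the Gibbs-energy functional built on $\mu_i$ from \eqref{e2.13} combined with the coercivity \eqref{e2.12} to control $\sum_i\|\boldsymbol{\nabla} Y^\varepsilon_i\|^2_{L^2(0,T;L^2(\Omega))}$. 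Estimates on the time derivatives in the appropriate dual spaces then follow directly from the equations.

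Finally, I would pass to the limit $\varepsilon\to 0$: Aubin--Lions yields, up to subsequences, strong convergence of $\boldsymbol{v}^\varepsilon$, $\theta^\varepsilon$ and $\boldsymbol{Y}^\varepsilon$ in $L^2(0,T;L^2(\Omega))$, allowing passage to the limit in the inertial term $(\boldsymbol{v}\cdot\boldsymbol{\nabla})\boldsymbol{v}$, the transport terms in the $\theta$ and $Y_i$ equations, and the reaction terms $\omega_i$ by continuity of the extension \eqref{e4.2} on $\mathbb{R}_+\times[0,1]^N$. The constraints $0\leq Y_i\leq 1$, $\sum_i Y_i=1$ and $\theta\geq 0$ are preserved in the limit by strong convergence. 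The main obstacle, as in Section 4, is the identification of the limit of the nonlinear diffusion flux $a^\varepsilon_{ij}(\boldsymbol{Y}^\varepsilon)\boldsymbol{\nabla} Y^\varepsilon_j$: the coefficients are merely continuous and the Stefan--Maxwell system becomes singular where all $Y_i$ vanish. I would handle this by combining the strong pointwise a.e. convergence of $\boldsymbol{Y}^\varepsilon$ with the convergence $a^\varepsilon_{ij}\to a_{ij}$ and the weak convergence of the gradients, exploiting the structural decompositions \eqref{e2.10}--\eqref{e2.11} to treat the diagonal and off-diagonal contributions separately, exactly as in the proof of Theorem \ref{t2.1}.
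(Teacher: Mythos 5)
Your proposal is correct and follows essentially the same route as the paper's Section 5: regularize the chemistry block exactly as in Section 4, solve the coupled regularized system, obtain the $\theta$-estimate first from the boundedness of the $\omega_i$, then the $\boldsymbol{v}$-estimate from the buoyancy term, then the uniform $\boldsymbol{Y}$-estimates via the Gibbs energy and \eqref{e2.12}, establish $\theta\geq 0$, $0\leq Y_i\leq 1$, $\sum_i Y_i=1$ by the same maximum-principle arguments, and pass to the limit $\varepsilon\to 0$. The only (inessential) deviation is that you construct the approximate solutions by a Schauder fixed point on a decoupled map, whereas the paper applies Galerkin with compactness and monotonicity directly to the coupled regularized system; both are standard, though in dimension $3$ your fixed-point map should be set up with a linearized (Oseen-type) convection term or at the Galerkin level to remain single-valued.
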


\begin{rem}\label{r2.1b}
Again the solution $(\boldsymbol{v},\theta, \boldsymbol{Y})$ given by Theorem \ref{t2.2} is to be understood as a weak solution satisfying a suitable variational formulation. The equations for $\boldsymbol{Y}$ are given by \eqref{e2.31a} while the ones for $\boldsymbol{v}$ and $\theta$ can be writen down in a standard way.
\end{rem}

\begin{rem}\label{r2.1}
The regularity of the solutions and their uniqueness will be investigated in a separate work.  Uniqueness can only be considered in space dimension 2 since, in space dimension 3, we encounter the difficulties of the incompressible Navier-Stokes equations in that space dimension.  For the regularity, in space dimension 2, we immediately obtain from \eqref{e2.1}-\eqref{e2.3}, and \eqref{e2.29}, \eqref{e2.30}, that:
\begin{equation}\label{e2.35}
\begin{cases}
&\boldsymbol{v}\in L^\infty(0,T; H^1 (\Omega)^2)\cap L^2(0,T; H^2(\Omega)^2),\\
&\theta\in L^\infty (0,T;H^1(\Omega))\cap L^2(0,T;H^2(\Omega)),
\end{cases}
\end{equation}
if $\boldsymbol{v}_0 \in H^1(\Omega)^2$ and $\theta_0 \in H^1(\Omega)$ satisfy the Dirichlet boundary conditions in \eqref{e2.20} and \eqref{e2.21} (see e.g. \cite{Tem77}).
\end{rem}

\begin{rem}\label{r2.1c}
From the mathematical point of view, Theorem \ref{t2.1} extends to all dimensions $n$. Theorem  \ref{t2.2} involving the coupling with the Navier-Stokes equations could extend to all dimensions $n$ as well, with some adjustments for the Navier Stokes equations as in \cite{Lio69}, see also \cite{Tem77}.
\end{rem}

\vskip0.1in

\section{The Stefan-Maxwell Equations and their Solution}\label{s3}
In this Section \ref{s3}, we study the fluxes given by \eqref{e2.5}-\eqref{e2.7}.   Also we introduce suitably modified fluxes which satisfy  the properties \eqref{e2.8}-\eqref{e2.12}. Finally we study explicitly the case of three species.

\subsection{The chemical background}\label{ss3.1}\hspace*{\fill} \\
We consider $N$ different chemical species and denote by $M_i$ the molecular mass of species $i$ and by $f_i = f_i(x,\xi, t)$ the velocity distribution function for molecules of species $i.$  Hence
\begin{equation*}
f_i(x,\xi ,t)dxd\xi
\end{equation*}
denotes the probable number of molecules of type $i$ in the range $dx = dx_1\ldots dx_n$ about the spatial position $x\in\mathbb{R}^n$ and with velocities in the range $d\xi = d\xi_1\ldots d\xi_n$ about the velocity $\xi$ at time $t.$

The total number of molecules of kind $i$ per unit spatial volume at $(x,t)$ is denoted by $N_i = N_i(x,t)$:
\begin{equation*}
N_i(x,t)=\int_{\mathbb{R}^n}f_i(x,\xi, t)d\xi ,\enspace i=1, \ldots , N.
\end{equation*}
The molecular concentration of species $i$ is
\begin{equation*}
C_i = N_i/\mathcal{A}, \enspace i =1, \ldots ,N,
\end{equation*}
where $\mathcal{A}$ is the Avogadro number.

The quantities that we will use and study are $\rho_i, Y_i, X_i$ defined as follows:\goodbreak\noindent
 $-\enspace \rho_i$ is the density of species $i$ (mass per unit volume):
\begin{equation}\label{e3.1}
\rho_i = M_iN_i = \mathcal{A}M_iC_i, \enspace i=1,\ldots , N,
\end{equation}
and
\begin{equation*}
\rho = \sum^N_{i=1}\rho_i,
\end{equation*}
is the total density.  We assume incompressibility; hence the total density is constant in space and time
\begin{equation*}
\rho =\rho_0.
\end{equation*}
$-\enspace Y_i$ is the mass fraction of species $i$:
\begin{equation}\label{e3.2}
Y_i = \frac{\rho_i}{\rho}, \enspace i=1, \ldots ,N,
\end{equation}
so that
\begin{equation}\label{e3.3}
0\leq Y_i\leq 1 \text{ for }  1 \leq i \leq N, \quad\sum^N_{i=1}Y_i=1.
\end{equation}
$-\enspace X_i$ is the mole fraction of species $i$:
\begin{equation}\label{e3.4}
X_i =\frac{C_i}{C},
\end{equation}
where $C=\sum^N_{j=1} C_j$ is the total number of moles per unit volume.  As for \eqref{e3.3}, we also have
\begin{equation}\label{e3.5}
0\leq X_i\leq 1 \text{ for }   1 \leq i \leq N, \quad\sum^N_{i=1}X_i=1.
\end{equation}

Simple and useful relations between the $X_i$ and $Y_i$ are derived below.  At this point, we proceed with the definition of kinematical quantities.

The average velocity of molecules of type $i,$ at $x$ at time $t,$ is given by
\begin{equation*}
\boldsymbol{\bar{v}}_i(x,t) =\frac{1}{N_i}\int_{\mathbf{R}^n}\xi f_i(x,\xi, t)d\xi.
\end{equation*}
The mass-weighted average velocity of the mixture is
\begin{equation}\label{e3.6}
\boldsymbol{v} =\sum^N_{i=1}Y_i\boldsymbol{\bar{v}}_i ,
\end{equation}
which is the ordinary flow velocity considered in fluid dynamics.
\vskip0.1in

The relative velocity of species $i$ is given by
\begin{equation*}
\textbf{V}_i(x,t)=\boldsymbol{\bar{v}}_i(x,t) - \boldsymbol{v}(x,t),\quad i=1,\ldots ,N,
\end{equation*}
and, due to \eqref{e3.3} and \eqref{e3.6}, we have
\begin{equation}\label{e3.7}
\sum^N_{i=1}Y_i\textbf{V}_i = 0.
\end{equation}
\vskip0.1in

The Stefan-Maxwell equations express the gradients of the $X_i$ in terms of the $\boldsymbol{V}_i:$
\begin{equation}\label{e3.8}
\boldsymbol{\nabla} X_i = \sum^N_{j=1;j\not=i} d_{ij} X_iX_j(\textbf{V}_j-\textbf{V}_i), \quad i=1,\ldots , N,
\end{equation}
where $d_{ij}=\kappa/D_{ij} > 0,$ and $D_{ij} = D_{ji}, i\not= j$, is the binary diffusion coefficient for species $i$ and $j,$ while $\kappa$ is a constant representing thermal diffusion coefficients.

We are interested in the fluxes $\textbf{F}_i$ given by
\begin{equation}\label{e3.9}
\textbf{F}_i=Y_i\textbf{V}_i.
\end{equation}
In particular, we aim to show that the $\textbf{F}_i$ can be determined in term of the $Y_j$ and $\boldsymbol{\nabla} Y_j$ through equations \eqref{e3.7} and \eqref{e3.8}
and the $\boldsymbol{X}-\boldsymbol{Y}$ relations, $\boldsymbol{X} = (X_1,\ldots ,X_N),\; \boldsymbol{Y} = (Y_1,\ldots ,Y_N).$

We conclude this section by describing the relations between the $X_i$ and $Y_i$.  In view of \eqref{e3.1}, \eqref{e3.2} and \eqref{e3.4}, we obtain for
$i=1,\ldots ,N$
\begin{equation*}
Y_i=\frac{M_iC_i}{\sum\limits^N_{{j=1}}M_{j}C_{j}}, \quad X_i=\frac{C_i}{\sum\limits^N_{{j=1}}C_j},
\end{equation*}
and setting
\begin{equation}\label{e3.10}
Y_M=\sum^N_{j=1}\frac{Y_j}{M_j}, \qquad X_M=\sum^N_{j=1}M_jX_j,
\end{equation}
we have
\begin{equation}\label{e3.11}
Y_MX_M=1,
\end{equation}
\begin{equation}\label{e3.12}
Y_i=\frac{M_iX_i}{X_M}, \quad X_i =\frac{Y_i}{M_iY_M}.
\end{equation}
Also, setting
\begin{equation}\label{e3.13}
\underline{M}=\min_{1\leq i\leq N}M_i,\quad \overline{M}=\max_{1\leq i\leq N}M_i,\quad \widetilde{M}=\overline{M}/\underline{M},
\end{equation}
\eqref{e3.10} yields readily since $X_i \geq 0$, $Y_i \geq 0$, and $\sum^N_{i=1} Y_i=\sum^N_{i=1} X_i=1:$
\begin{equation}\label{e3.14}
\frac{1}{\overline{M}}\leq Y_M\leq\frac{1}{\underline{M}}, \quad{\underline{M}}\leq X_M\leq \overline{M}.
\end{equation}
\vskip0.1in
Next, we turn to some relations between the gradients of $X_i$ and $Y_i.$  We assume for the moment that the $X_i$ and $Y_i$ are smooth functions (say $C^1$) of
$(x,t)$  for $x\in\Omega$ and $t\in(0,T).$  In any event, the relations are pointwise relations, which are derived independently of the location $(x,t)$.  From \eqref{e3.12}, we have
\begin{equation}\label{e3.15}
\boldsymbol{\nabla} Y_i=\frac{M_i}{X_M}\boldsymbol{\nabla} X_i - \frac{M_iX_i}{X^2_M}\boldsymbol{\nabla} X_M,
\end{equation}
\begin{equation}\label{e3.16}
\boldsymbol{\nabla} X_i=\frac{1}{M_iY_M}\boldsymbol{\nabla} Y_i - \frac{Y_i}{M_i(Y_M)^2}\boldsymbol{\nabla} Y_M.
\end{equation}
Also, setting
\begin{equation*}
|\boldsymbol{\nabla X}|^2 = \sum^N_{i=1}|\boldsymbol{\nabla} X_i|^2,\quad|\boldsymbol{\nabla Y}|^2 = \sum^N_{i=1}|\boldsymbol{\nabla} Y_i|^2,
\end{equation*}
we obtain
\begin{equation}\label{e3.17}
\frac{1}{2N(\widetilde{M})^2} |\boldsymbol{\nabla X}|\leq |\boldsymbol{\nabla Y}|\leq 2N(\widetilde{M})^2|\boldsymbol{\nabla X}|.
\end{equation}
Indeed, with \eqref{e3.13}-\eqref{e3.15},
\begin{equation*}
|\boldsymbol{\nabla} Y_i|\leq\widetilde{M}|\boldsymbol{\nabla} X_i|+\widetilde{M}^2\sum^N_{j=1}|\boldsymbol{\nabla} X_j|\leq 2\widetilde{M}^2\sum^N_{j=1}|\boldsymbol{\nabla} X_j|,
\end{equation*}
which gives readily the second inequality in \eqref{e3.17}.  The proof of the first one is similar by making use of \eqref{e3.16}.

\subsection{The fluxes $\boldsymbol{F}_i$}\label{ss3.2}\hspace*{\fill} \\
From the physical context the $i^{th}$ species is absent in a region where $Y_i=0,$ so that $\textbf{V}_i$ does not make sense and $\textbf{F}_i=0$ in
such a region.  We will see how this is reflected in a purely algebraic study of the Stefan-Maxwell equations.

We start with some remarks concerning the linear equations (for the $\textbf{V}_i$) \eqref{e3.7}, \eqref{e3.8}.  By making use of \eqref{e3.12}, let us rewrite
them as
\begin{equation}\label{e3.18}
\sum^N_{i=1}Y_i\textbf{V}_i=0
\end{equation}
\begin{equation}\label{e3.19}
B(\boldsymbol{Y})\textbf{V}=\textbf{P},
\end{equation}
where
\begin{equation}\label{e3.20}
B_{ij}(\boldsymbol{Y})=\begin{cases}
&-d'_{ij} Y_iY_j\text{ for }j\not= i,\\
&\\
&\sum\limits^N_{k=1;k\not= i} d'_{ik}Y_iY_k\text{ for }j=i,
\end{cases}
\end{equation}
with
\begin{equation*}
d'_{ij}=\frac{d_{ij}}{M_iM_j},
\end{equation*}
and
\begin{equation*}
\textbf{P}=(\textbf{P}_1,\ldots ,\textbf{P}_N), \quad\textbf{P}_i = -Y^2_M\boldsymbol{\nabla} X_i,
\end{equation*}
is given by \eqref{e3.10} and \eqref{e3.16} in terms of $\boldsymbol{Y}$ and $\boldsymbol{\nabla Y}.$  Clearly, \eqref{e3.18}, \eqref{e3.19} is a system of $N+1$ vectorial equations with $N$ vectorial unknowns (vectors of $\mathbb{R}^n$).  Also, the matrix $B(\boldsymbol{Y})$ is symmetric and semi-definitive positive, since
\begin{equation}\label{e3.21}
\begin{split}
(B(\boldsymbol{Y})\textbf{V,V}) &= \sum^N_{i,j=1; i\not= j} d'_{ij} Y_iY_j (\textbf{V}_i-\textbf{V}_j) \cdot\textbf{V}_i,\\
&=\sum^N_{i,j=1; i<j} d'_{ij} Y_iY_j |\textbf{V}_i-\textbf{V}_j|^2, \text{ due to }d'_{ji} =d'_{ij}.
\end{split}
\end{equation}
It is worth mentioning also that
\begin{equation}\label{e3.21a}
\sum^N_{i=1} B_{ij}(\boldsymbol{Y}) = 0, \; j=1, \ldots , N;\qquad\sum^N_{j=1} B_{ij}(\boldsymbol{Y}) = 0, \; i=1, \ldots, N.
\end{equation}

\vskip0.1in
It follows from \eqref{e3.21} that if all the $Y_i$ are strictly positive, the matrix $B(\boldsymbol{Y})$ has rank $N-1.$  In that case, since $\sum^N_{i=1}\textbf{P}_i=-Y^2_M\sum^N_{i=1}\boldsymbol{\nabla} X_i = 0$ (a.e.) as $\sum^N_{i=1}X_i =1$, the equations \eqref{e3.19} are consistent, so that equations \eqref{e3.18} and \eqref{e3.19} determine uniquely the $\textbf{V}_i.$  In summary, from a strictly algebraic point of view, if all $Y_i$ are strictly positive and
\begin{equation}\label{e3.21a}
\sum^N_{i=1} \textbf{P}_i=0,
\end{equation}
equations \eqref{e3.18}, \eqref{e3.19} uniquely determine $\textbf{V}_1,\ldots ,\textbf{V}_N.$

\vskip0.1in
If, say, $Y_1,\ldots, Y_k$ are $>0$ and $Y_{k+1}=\ldots = Y_N=0,$ then, by inspection of the matrix $B(\boldsymbol{Y})$, we see, as before, that $\textbf{V}_1, \ldots, \textbf{V}_k$ are uniquely determined.  The remaining equations for $\textbf{V}_{k+1}, \ldots, \textbf{V}_N$ have no solutions unless $\textbf{P}_i=0$, $i=k+1, \ldots , N$, in which case the corresponding $\textbf{V}_i$ are arbitrary, and $\textbf{F}_i = Y_i\textbf{V}_i=0,$ for $i=k+1,\ldots , N.$\footnote{From the analytical point of view (by opposition to the algebraic point of view), in a region where $Y_i=X_i=0, \; \textbf{P}_i=- Y^2_M\boldsymbol{\nabla} X_i=0$ a.e.}  We come back below to the definitions of the $\textbf{F}_i.$  If some of the $Y_i$ vanish, it is not possible to determine uniquely the $\textbf{V}_i.$  However, we are only interested in defining the fluxes $\textbf{F}_i$ and we will show later on that this is indeed possible.

\vskip0.1in
Since $Y_i \geq 0$ and $\sum^N_{i=1}Y_i =1$ in the case of interest to us, we must continue to study the resolution of the linear system \eqref{e3.18}-\eqref{e3.19} in the case where $Y_i\geq 0 \; \forall i,$ while not all of the $Y_i$ vanish, and $\textbf{P}$ is a vector of $\mathbb{R}^{Nn},$ not necessarily equal to $-Y^2_M\boldsymbol{\nabla} \textbf{X}.$

\vskip0.1in
At this point, let us continue to study the case $Y_i>0,\enspace\forall i.$  The above argument for existence and uniqueness clearly breaks the symmetry with respect to the unknowns $\textbf{V}_1, \ldots\textbf{V}_N,$ one of the equations in \eqref{e3.19} being replaced by \eqref{e3.18}.  To avoid this difficulty, we aim to give a different formulation of \eqref{e3.18}-\eqref{e3.19}.  For that purpose, let us introduce the quantity
\begin{equation}\label{e3.22}
\begin{split}
(B(\boldsymbol{Y})\textbf{V}, \textbf{V}) &+ \gamma\left|\sum^N_{i=1}Y_i\textbf{V}_i\right|^2\\
& = (\text{with }\eqref{e3.21})\\
&=\sum^N_{i,j=1,i<j} d'_{ij}Y_iY_j |\textbf{V}_i-\textbf{V}_j|^2 +\gamma\left|\sum^N_{i=1}Y_i\textbf{V}_i\right|^2.
\end{split}
\end{equation}
Setting
\begin{equation*}
\underline{d}\ '=\min_{i,j}d'_{ij}, \quad\overline{d}'=\max_{i,j} d'_{ij},
\end{equation*}
and $\gamma = \underline{d}',$ we infer from \eqref{e3.22} that
\begin{equation}\label{e3.23}
\begin{split}
(B(\boldsymbol{Y})\textbf{V},\textbf{V}) + \gamma\left|\sum^N_{i=1}Y_i\textbf{V}_i\right|^2 &\geq\gamma
\left\{
\sum^N_{i,j=1,i<j} Y_iY_j(|\textbf{V}_i|^2 +|\textbf{V}_j|^2) +\sum^N_{i=1}Y^2_i|\textbf{V}_i|^2
\right\}\\
&\geq\gamma\sum^N_{i,j=1}Y_iY_j|\textbf{V}_i|^2 =\gamma \left(\sum^N_{j=1}Y_j\right)\left(\sum^N_{i=1}Y_i|\textbf{V}_i|^2\right).
\end{split}
\end{equation}
Therefore, the $N\times N$ matrix $C(\boldsymbol{Y})$ defined by
\begin{equation}\label{e3.25}
C_{ij}(\boldsymbol{Y}) = B_{ij}(\boldsymbol{Y}) +\gamma Y_iY_j,\quad 1\leq i,j\leq N,
\end{equation}
is positive definite when $Y_j>0,\enspace\forall j$ (and without any assumption on $\sum^N_{j=1}Y_j)$ since 
\begin{equation}\label{e3.25a}
\sum^N_{i,j=1} C_{ij}(\boldsymbol{Y})\textbf{V}_j\geq\gamma \left(\sum^N_{j=1}Y_j\right)\left(\sum^N_{i=1}Y_i|\textbf{V}_i|^2\right).
\end{equation}
In particular, when all $Y_j$ are strictly positive, the problem
\begin{equation}\label{e3.26}
C(\boldsymbol{Y})\textbf{V} =\textbf{P},
\end{equation}
where $\textbf{P} =(\textbf{P}_1, \ldots, \textbf{P}_N)$, $\textbf{P}_i\in\mathbb{R}^n$, has a unique solution.
\vskip0.1in

Note that this problem is equivalent to \eqref{e3.18}-\eqref{e3.19} when $\sum^N_{i=1} \textbf{P}_i=0$ and all $Y_j$ are strictly positive (even if $\sum^N_{j=1}Y_j\not=1$).  Indeed, if $\textbf{V}$ is the solution of \eqref{e3.18}-\eqref{e3.19}, then
\begin{equation*}
\sum^N_{j=1}C_{ij}(\boldsymbol{Y})\textbf{V}_j =\sum^N_{j=1}B_{ij} \ (\boldsymbol{Y})\textbf{V}_j +\gamma Y_i\sum^N_{j=1} Y_j\textbf{V}_j = \textbf{P}_i.
\end{equation*}
Conversely, if $C(\boldsymbol{Y})\textbf{V}=\textbf{P},$ then, on the one hand, by adding the equations, we find that
\begin{equation}\label{e3.27}
\sum^N_{i,j=1} C_{ij}(Y)\textbf{V}_j =\sum^N_{i=1}\textbf{P}_i=0,
\end{equation}
while, on the other hand, since $\sum^N_{i=1}B_{ij}(Y) = 0$, we have
\begin{equation*}
\sum^N_{i,j=1} C_{ij}(\boldsymbol{Y})\textbf{V}_j =\sum^N_{i,j=1} B_{ij}(\boldsymbol{Y})\textbf{V}_j +\gamma \left(\sum^N_{i=1}Y_i\right)\sum^N_{j=1} Y_j\textbf{V}_j =\gamma
\left(\sum^N_{i=1}Y_i\right)\sum^N_{j=1}Y_j\textbf{V}_j.
\end{equation*}
Combining these two equalities successively gives \eqref{e3.18} and \eqref{e3.19}.  

Now \eqref{e3.26} is an invertible system of $Nn$ equations for $Nn$ unknowns, which is symmetric with respect to the unknowns.

\vskip0.1in
We aim now to address the general case where some but not all $Y_i$ vanish.  As already mentioned, we can not define $\textbf{V}_i$ in general but, as we will see, we can define the $\textbf{F}_i$.  We assume again that the $\textbf{P}_i$ are arbitrary vectors of $\mathbb{R}^n$ $(\textbf{P}_i\not= - Y^2_M\boldsymbol{\nabla} X_i)$ such that
\begin{equation}\label{e3.27a}
\sum^N_{i=1}\textbf{P}_i=0,
\end{equation}
and, replacing $Y_i\textbf{V}_i$ by $\textbf{F}_i,$ we rewrite \eqref{e3.18}-\eqref{e3.19} in the form
\begin{equation}\label{e3.28}
\sum^N_{i=1}\textbf{F}_i=0,
\end{equation}
\begin{equation}\label{e3.29}
\left(\sum^N_{k=1;k\not= i}d'_{ik} Y_k\right)\textbf{F}_i
-Y_i\sum^N_{j=1; j\not= i} d'_{ij}\textbf{F}_j =\textbf{P}_i,\enspace 1\leq i\leq N.
\end{equation}
We consider again $\gamma = \underline{d}'$ as in \eqref{e3.23} and rewrite the linear system \eqref{e3.26}, replacing
$Y_i \textbf{V}_i$ by $\textbf{F}_i.$  We obtain (compare to \eqref{e3.29}):
\begin{equation}\label{e3.30}
\begin{split}
\left(\sum^N_{j=1;j\not= i}d'_{ij} Y_j+\gamma Y_i\right)\textbf{F}_i  & -Y_i\sum^N_{j=1; j\not= i}\left(d'_{ij} -\gamma\right)\textbf{F}_j\\
& = \textbf{P}_i, \quad 1\leq i\leq N.
\end{split}
\end{equation}
\vskip0.1in

As for \eqref{e3.26}, we show that, when \eqref{e3.27a} is satisfied, \eqref{e3.28}-\eqref{e3.29} is equivalent to \eqref{e3.30}.  Indeed, it is clear that \eqref{e3.28}-\eqref{e3.29} imply \eqref{e3.30}.  Conversely if the $\textbf{F}_i$ satisfy equations \eqref{e3.30} then, by adding these equations for $i=1, \ldots, N,$ we obtain
\begin{equation}\label{e3.31}
\gamma \left(\sum^N_{i=1}Y_i\right)\left(\sum^N_{j=1}\textbf{F}_j\right) =\sum^N_{j=1}\textbf{P}_j.
\end{equation}
Hence \eqref{e3.28} follows from \eqref{e3.27a}; then equations \eqref{e3.30} reduce to equations \eqref{e3.29}.

\vskip0.1in
We claim that \eqref{e3.30} possesses a unique solution, even if \eqref{e3.27a} is not satisfied.  Let us assume, say that $Y_1, \ldots, Y_k >0,$ while $Y_{k+1}= \ldots = Y_N=0;$ equations \eqref{e3.30} give for $i=k+1, \ldots N,$
\begin{equation}\label{e3.32}
\textbf{F}_i =\textbf{P}_i/S_i, \enspace \text{ with } \enspace S_i =S_i(\boldsymbol{Y}) = \sum^k_{j=1}d'_{ij}Y_j,\enspace  i=k+1, \ldots N.
\end{equation}
For $i=1, \ldots , k, $ the remaining system \eqref{e3.30} reads
\begin{equation}\label{e3.33}
\begin{split}
\left(\sum^k_{j=1; j\not= i}d'_{ij}Y_j +\gamma Y_i\right)\textbf{F}_i &- Y_i\sum^k_{j=1;j\not= 1} \left(d'_{ij} -\gamma\right)\textbf{F}_j\\
&= \textbf{P}_i + Y_i\sum^N_{j=k+1}\left( d'_{ij} -\gamma\right)\textbf{F}_j,
\end{split}
\end{equation}
where $\gamma = \underline{d}'$ again.  Writing $\textbf{F}_i= Y_i\textbf{V}_i,$ the system \eqref{e3.33} is similar to \eqref{e3.26} and it can be shown in the same way that it defines the $\textbf{F}_i$, $i=1, \ldots, k,$ uniquely.
\vskip0.1in

To summarize, we have shown that, for every $\textbf{P} = (\textbf{P}_1, \ldots \textbf{P}_N)\in\mathbb{R}^{Nn}$, \eqref{e3.30} possesses a unique solution $\textbf{F}=(\textbf{F}_1,\ldots , \textbf{F}_N),$ provided $Y_i\geq 0,\enspace\forall i,$ and not all the $Y_i$ vanish.  Furthermore, in view of \eqref{e3.31}, \eqref{e3.28} holds if and only if \eqref{e3.27a} is assumed, and in this case \eqref{e3.30} is equivalent to \eqref{e3.28}-\eqref{e3.29}.
\vskip0.1in

We summarize this study in the following theorem.
\begin{thm}\label{t3.1}
Let $\textbf{P}_i, 1 \leq i \leq N$, be arbitrary vectors of $\mathbb{R}^n$ satisfying the physically relevant condition:
\begin{equation*}
\sum^N_{i=1}\textbf{P}_i=0.
\end{equation*}
We consider the Stefan-Maxwell equations rewritten in the form \eqref{e3.18}, \eqref{e3.19} for the $\textbf{V}_i,$ or in the form \eqref{e3.28}, \eqref{e3.29} for the $\textbf{F}_i$, where $(Y_1,,\ldots ,Y_N) \in \mathbb{R}^N$ is given, with $Y_i\geq 0 \; \forall i$ and not all of the $Y_i$ vanish.
\vskip0.1in
\noindent (i) If $Y_i > 0 \;\forall i$, these $N+1$ linear equations are consistent and define the $\textbf{V}_i$ and $\textbf{F}_i=Y_i\textbf{V}_i$ uniquely. Furthermore, the $\textbf{V}_i$ are the solutions of 
the linear system \eqref{e3.26} which has a symmetric positive matrix.

\vskip0.1in
\noindent (ii)  If some of the $Y_i$ are zero but not all of them, say if $Y_1, \ldots , Y_k>0, Y_{k+1} = \ldots = Y_N=0, \textbf{V}_1, \ldots ,\textbf{V}_k$ are uniquely defined and $\textbf{V}_{k+1}, \ldots, \textbf{V}_N$ are undetermined.  In this case all the $\textbf{F}_i$ are uniquely determined and are given by \eqref{e3.32} and the resolution of the linear system \eqref{e3.33} of order $k$. Furthermore $\textbf{F}_{k+1} = \ldots =\textbf{F}_N =0$ in the (relevant) case where $\textbf{P}_{k+1} = \ldots =\textbf{P}_N=0.$

\vskip0.1in
\noindent (iii) In all cases, the $\textbf{F}_i$ are uniquely determined and solutions of the linear system \eqref{e3.30} which has an invertible matrix.
\end{thm}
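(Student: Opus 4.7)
The theorem really repackages the detailed discussion that precedes it, so my plan is to assemble the pieces already available in Subsection \ref{ss3.2} into a clean three-part proof.

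For part (i), the core tool is the modified matrix $C(\boldsymbol{Y})$ introduced in \eqref{e3.25} with the penalization parameter $\gamma=\underline{d}\,'$. The positivity estimate \eqref{e3.25a}, derived from the Stefan-Maxwell quadratic form \eqref{e3.21} plus the penalization term, shows that $C(\boldsymbol{Y})$ is strictly positive definite as soon as all $Y_i>0$. Hence \eqref{e3.26} is uniquely solvable for every right-hand side. Since the computation that was carried out between \eqref{e3.26} and \eqref{e3.28} exhibits \eqref{e3.26} as equivalent to the pair \eqref{e3.18}--\eqref{e3.19} whenever the compatibility condition $\sum_i\textbf{P}_i=0$ holds, existence and uniqueness for the original Stefan-Maxwell system follow, and the fluxes are recovered as $\textbf{F}_i=Y_i\textbf{V}_i$.

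For part (ii), suppose without loss of generality that $Y_1,\ldots,Y_k>0$ and $Y_{k+1}=\ldots=Y_N=0$. Writing out the rows $i>k$ of \eqref{e3.30}, every term carrying $Y_i$ as a prefactor vanishes, leaving exactly the scalar formulas \eqref{e3.32} which directly determine $\textbf{F}_{k+1},\ldots,\textbf{F}_N$. Substituting those values into the first $k$ rows produces the reduced system \eqref{e3.33}, which has the same structure as \eqref{e3.26} applied to a mixture of only $k$ species, all with strictly positive mass fraction; a modified (and now known) right-hand side appears. Applying part (i) to that reduced problem yields unique $\textbf{F}_1,\ldots,\textbf{F}_k$, while $\textbf{V}_{k+1},\ldots,\textbf{V}_N$ remain undetermined simply because the equations they would satisfy carry the prefactor $Y_i=0$ and disappear. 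Finally, \eqref{e3.32} immediately gives $\textbf{F}_i=0$ for $i>k$ in the physically relevant case $\textbf{P}_{k+1}=\ldots=\textbf{P}_N=0$. Part (iii) is then a direct corollary: in every admissible configuration, \eqref{e3.30} admits a unique solution for every right-hand side, so its matrix is invertible.

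The subtle point that I expect to need the most care is the equivalence between system \eqref{e3.30} and the physically natural pair \eqref{e3.28}--\eqref{e3.29} under the compatibility condition $\sum_i\textbf{P}_i=0$. I would make this precise by summing the rows of \eqref{e3.30} and exploiting the column-sum identity $\sum_i B_{ij}(\boldsymbol{Y})=0$ (inherited from \eqref{e3.21a} and still valid when some $Y_j$ vanish); the result is precisely \eqref{e3.31}, and once $\sum_i\textbf{P}_i=0$ is imposed it forces $\sum_i\textbf{F}_i=0$, which in turn collapses the $\gamma$-penalization in \eqref{e3.30} and recovers \eqref{e3.29}. With that equivalence in hand, the three assertions of the theorem follow immediately from the invertibility arguments above.
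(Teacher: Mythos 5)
Your proof is correct and follows essentially the same route as the paper: positive definiteness of the penalized matrix $C(\boldsymbol{Y})$ via \eqref{e3.23}--\eqref{e3.25a}, the equivalence of \eqref{e3.30} with \eqref{e3.28}--\eqref{e3.29} obtained by summing the rows and using the column-sum identity for $B(\boldsymbol{Y})$, and the reduction of the degenerate case to \eqref{e3.32} plus the $k$-species system \eqref{e3.33}. No gaps; this is the paper's argument.
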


\subsection{More about the fluxes}\label{ss3.3}\hspace*{\fill} \\
We want now to derive some properties of the fluxes $\textbf{F}_i$ that are the solutions to the linear system \eqref{e3.30}.  Obviously using Cramer's rule, we can write
\begin{equation}\label{e3.34}
\textbf{F}_i=\sum^N_{j=1} f_{ij}(Y_1, \ldots ,Y_N)\textbf{P}_j,
\end{equation}
where the $f_{ij}$ are rational functions with respect to the $Y_{j},$ defined on 
$\mathbb{R}^N_+\backslash\left\{(0,\ldots ,0)\right\}$ where $\mathbb{R}^N_+ = [0,+\infty)^N$.  Also, comparing
\eqref{e3.32} and \eqref{e3.34} we see that, for $i\not= j,\; f_{ij}$ vanishes at $Y_i=0,$ so that
\begin{equation}\label{e3.35}
\begin{split}
&f_{ij}(Y_1,\ldots ,Y_N) =Y_i\tilde{f}_{ij} (Y_1, \ldots ,Y_N), \text{ where }\tilde{f}_{ij}\text{ is }\\
&\text{a rational function continuous on }\mathbb{R}^N_+\backslash\left\{(0,\ldots, 0)\right\}.
\end{split}
\end{equation}
\vskip0.1in

Recall that if $Y_i>0,\enspace\forall i,$ then $\textbf{F}_i =Y_i\textbf{V}_i$ and the $\textbf{V}_i$ are solutions of \eqref{e3.26}.  Since the matrix $C(\boldsymbol{Y})$ is definite positive,  the inversion of \eqref{e3.26} gives $\textbf{V}_i=\sum^N_{j=1} D_{ij}(\boldsymbol{Y})\textbf{P}_j$ where $D(\boldsymbol{Y}) = C(\boldsymbol{Y})^{-1}$ is symmetric definite positive.  In particular, since $D_{ii}(\boldsymbol{Y})\geq 0$, the decomposition \eqref{e3.34} of $\textbf{F}_i = Y_i\textbf{V}_i$ is such that
\begin{equation}\label{e3.36}
f_{ii}(\boldsymbol{Y})\geq 0\text{ on } ]0,+\infty)^N\text{ and, by continuity, on }\mathbb{R}^N_+\backslash\left\{(0,\ldots , 0)\right\}.
\end{equation}
\vskip0.1in

Let us specialize this result to the case where the $Y_i$ are functions from $\Omega$ into $\mathbb{R}_+$, say of class $C^1$, such that $\sum^N_{i=1}Y_i(x)\not=0$ at each point
$x\in\Omega$ and $\textbf{P}_i=\textbf{P}_i(x)=-Y^2_M\boldsymbol{\nabla} X_i(x)$ where
\begin{equation}\label{e3.37}
X_i=\frac{Y_i}{M_iY_M}, \quad Y_M =\sum^N_{j=1}\frac{Y_j}{M_j}>0.
\end{equation}
Then \eqref{e3.34} becomes
\begin{equation}\label{e3.38}
\textbf{F}_i=-\sum^N_{j=1} f_{ij}(Y_1, \ldots, Y_N)Y^2_M\boldsymbol{\nabla} X_j.
\end{equation}
Since $\sum^N_{i=1} X_i=1$\footnote{By \eqref{e3.37}, $\sum^N_{i=1}X_i=1$, is valid although $\sum^N_{i=1}Y_i$ may not be equal to one.}, \eqref{e3.27a} is satisfied so that \eqref{e3.28} is satisfied too and  reads
\begin{equation}\label{e3.39}
\sum^N_{i,j=1}f_{ij}(Y_1, \ldots, Y_N)\boldsymbol{\nabla} X_j=0.
\end{equation}
Then we express the $\boldsymbol{\nabla} X_j$ in terms of the $\boldsymbol{\nabla} Y_\ell:$
\begin{equation*}
\boldsymbol{\nabla} X_j = \frac{\boldsymbol{\nabla} Y_j}{M_jY_M} -\frac{Y_j}{M_jY^2_M}\sum^N_{\ell = 1}\frac{\boldsymbol{\nabla} Y_\ell}{M_\ell},
\end{equation*}
and the fluxes $\textbf{F}_i$ become
\begin{equation}\label{e3.40}
\textbf{F}_i =-\sum^N_{j=1}\tilde a_{ij}(Y_1, \ldots Y_N)\boldsymbol{\nabla} Y_j,
\end{equation}
\begin{equation}\label{e3.41}
\tilde a_{ij}=\frac{f_{ij}Y_M}{M_j} - \sum^N_{\ell =1}\frac{Y_\ell f_{i\ell}}{M_jM_\ell }.
\end{equation}
Therefore, we infer from the properties of the $f_{ij}$ that
\begin{equation}\label{e3.42}
\tilde a_{ij}(Y_1, \ldots, Y_N) =
\begin{cases}
&Y_ia^*_{ij}(Y_1, \ldots, Y_N),\text{ if }i\not= j,\\
&b^{*0}_i(Y_1, \ldots ,Y_N) + Y_ib^{*1}_i(Y_1, \ldots, Y_N), \text{ if }i=j,
\end{cases}
\end{equation}
where
\begin{equation}\label{e3.43}
\begin{cases}
&a^*_{ij}, b^{*0}_i\text{ and }b^{*1}_i\text{ are rational functions of }Y_1, \ldots, Y_N,\\
&\text{continuous in }\mathbb{R}^N_+\backslash\left\{(0, \ldots, 0)\right\}\text{ and }b^{*0}_i\geq 0.
\end{cases}
\end{equation}
Also since \eqref{e3.40} is just a rewriting of \eqref{e3.38}, \eqref{e3.39} implies
\begin{equation}\label{e3.44}
\sum^N_{i,j=1}\tilde a_{ij}(Y_1, \ldots Y_N)\boldsymbol{\nabla} Y_j=0,
\end{equation}
provided, as before, that the $Y_i$ are functions from $\Omega$ into $\mathbb{R}_+$ such that $\sum^N_{i=1}Y_i(x)\not= 0$ at each point
$x\in\Omega$; in particular $\sum^N_{j=1}\boldsymbol{\nabla} Y_j(x)=0$ is not required for \eqref{e3.44}.

\vskip0.1in
At this point we have shown that the Stefan-Maxwell equations allow us to define the fluxes $\textbf{F}_i$ (but not necessarily the $\textbf{V}_i$) provided that
$Y_j\geq 0,\forall j,$ and not all $Y_j$ vanish.  For the mathematical study (see  \eqref{e2.8}-\eqref{e2.11}) we will need the fluxes to be defined for $\boldsymbol{Y}=(Y_1,\ldots ,Y_N)=(0,\ldots , 0)$. Clearly if
all $Y_i$ vanish, equations \eqref{e3.18}-\eqref{e3.19}
are not valid since in \eqref{e3.8} we can not express $X_i$ in terms of the $Y_j$ by \eqref{e3.12} ($Y_M$ vanishes). This leads us to introduce modified expressions of the fluxes, that is modifications of the coefficients in \eqref{e3.34} and \eqref{e3.40}.  The new coefficients will be defined on all of $\mathbb{R}^N_+$ and the corresponding new fluxes will coincide with the previous ones, provided that
\begin{equation}\label{e3.45}
\sum^N_{j=1}Y_j=1.
\end{equation}
Since we will be able to show that the solution of \eqref{e2.4} (supplemented with the boundary and initial conditions) satisfies \eqref{e3.45}, the fluxes in \eqref{e2.4} will indeed be the ones given by the Stefan-Maxwell equations, so that our modification is licit.
\vskip0.1in

Coming back to the expression \eqref{e3.34} for the fluxes, the coefficients $f_{ij}(Y_1, \ldots ,Y_N)$ are rational functions continuous on $\mathbb{R}^N_+\backslash\left\{(0, \ldots, 0)\right\}$, but with a singularity at $(0, \ldots, 0)$. In order to preserve convenient properties of these coefficients, it is useful to rewrite the corresponding fractions with the same positive denominator
\begin{equation*}
f_{ij}=\frac{g_{ij}}{h},
\end{equation*}
where the $g_{ij}$ and $h$ are polynomial functions of $\boldsymbol{Y}$, h does not vanish in $\mathbb{R}^N_+\backslash\left\{(0, \ldots, 0)\right\}$ and is positive
\footnote{That is $a_1/b_1,\ldots ,a_N/b_N,$ are written as fractions with denominator $(b_1,\ldots b_N)^2.$ }. Setting
\begin{equation*}
\overline{f}_{ij} = \frac{h}{h+\left(\sum^N_{\ell=1} Y_\ell-1\right)^2} f_{ij}=
\frac{g_{ij}}{h+\left(\sum^N_{\ell=1} Y_\ell-1\right)^2},
\end{equation*}
the new coefficients are rational functions of $Y_1, \ldots, Y_N,$ defined and continuous on all of $\mathbb{R}^N_+$ which coincide with $f_{ij}$ if
$\sum^N_{j=1}Y_j=1$ and they satisfy properties analogous to \eqref{e3.35} and \eqref{e3.36}.  Also if we set
\begin{equation*}
\overline{\textbf{F}}_i =\sum^N_{j=1}\overline{f}_{ij}(Y)\textbf{P}_j,
\end{equation*}
we still have
\begin{equation}\label{e3.46}
\sum^N_{i=1}\overline{\textbf{F}}_i=0\enspace\text{ if }\enspace\sum^N_{i=1}\textbf{P}_i=0.
\end{equation}
\vskip0.1in
Next we replace $\tilde{a}_{ij}$ in \eqref{e3.41} by
\begin{equation}\label{e3.47}
a_{ij} = \frac{h}{h+\left(\sum^N_{\ell=1} Y_\ell-1\right)^2} \tilde{a}_{ij}= \frac{\bar{f}_{ij}Y_M}{M_j} - \sum^N_{\ell =1}\frac{Y_\ell\bar{f}_{i\ell}}{M_jM_\ell}.
\end{equation}
The $a_{ij}$ are rational functions of the $Y_\ell,$ continuous on all of $\mathbb{R}^N_+,$ taking the same values as $\tilde{a}_{ij}$ if
$\sum^N_{j=1}Y_j=1.$  They satisfy properties analogous to \eqref{e3.42} and \eqref{e3.43}.  Furthermore, setting
\begin{equation}\label{e3.48b}
\tilde{\textbf{F}}_i = -\sum^N_{j=1} a_{ij}(Y_1, \ldots , Y_N)\boldsymbol{\nabla} Y_j,
\end{equation}
we have $\tilde{\textbf{F}}_i=\textbf{F}_i$ when $\sum^N_{j=1}Y_j=1,$ and
\begin{equation*}
\sum^N_{i=1}\tilde{\textbf{F}}_i=0\enspace\text{ since }\enspace\sum^N_{i=1}\textbf{F}_i=0,
\end{equation*}
i.e.
\begin{equation}\label{e3.48}
\sum^N_{i,j=1}a_{ij}(Y_1,\ldots ,Y_N)\boldsymbol{\nabla} Y_j=0,
\end{equation}
even if $\sum^N_{j=1}\boldsymbol{\nabla} Y_j(x)$ does not vanish.
\vskip0.1in
Relations \eqref{e3.47} define the $a_{ij}$ in \eqref{e2.8}.  The smoothness assumptions as well as \eqref{e2.9}, \eqref{e2.10} and \eqref{e2.11} are satisfied.  The fluxes \eqref{e2.8} coincide with the ones given by the Stefan-Maxwell equations, provided that $\sum^N_{i=1}Y_i=1$.

In summary we have proven the following:
 \begin{thm}\label{t3.2}
For $i=1,...,N$, let $Y_i \in C^1(\Omega)$ (resp. $Y_i \in H^1(\Omega)$) be given such that $Y_i(x) \geq 0$ and $\sum^N_{i=1}Y_i(x)\not=0$ at each point
$x\in\Omega$ (resp. for a.e. $x\in\Omega$). Then the generalized fluxes $\tilde{\textbf{F}}_i(x)$ are given by \eqref{e3.48b} where the $a_{ij}$ are rational functions of the $Y_i$ defined and continuous on all of $\mathbb{R}^N_+$. 

Furthermore if $\sum^N_{i=1} Y_i(x)=1$ for $x \in \Omega$ (resp. for a.e. $x\in\Omega$), they coincide with the solutions of the linear system \eqref{e3.28}, \eqref{e3.29} with $\textbf{P}_i = -Y^2_M \boldsymbol{\nabla} X_i$ (and the Stefan-Maxwell equations). 

Finally the $a_{ij}$ satisfy the properties \eqref{e2.9}, \eqref{e2.10} and \eqref{e2.11}.
\end{thm}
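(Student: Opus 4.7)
The plan is to consolidate the constructions of Sections \ref{ss3.2} and \ref{ss3.3} into a single statement, organized around the three assertions and relying on Theorem \ref{t3.1}.

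First I would verify the definition and global continuity of the $a_{ij}$. Theorem \ref{t3.1}(iii) guarantees that \eqref{e3.30} has an invertible matrix on $\mathbb{R}^N_+\setminus\{(0,\ldots,0)\}$, so Cramer's rule yields the representation \eqref{e3.34} with $f_{ij}$ rational in $\boldsymbol{Y}$ and satisfying the factorization \eqref{e3.35} and the sign condition \eqref{e3.36}. Writing the $f_{ij}$ over a common positive polynomial denominator $h$ and setting $\overline{f}_{ij}=g_{ij}/(h+(\sum_\ell Y_\ell-1)^2)$, I would observe that the new denominator is strictly positive on all of $\mathbb{R}^N_+$, since at the origin it equals $h(0)+1\geq 1$; hence $\overline{f}_{ij}$ is continuous on $\mathbb{R}^N_+$. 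Formula \eqref{e3.47} then defines $a_{ij}$ as a linear combination of the $\overline{f}_{i\ell}$ with polynomial coefficients in $\boldsymbol{Y}$, so it is continuous on $\mathbb{R}^N_+$.

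Next I would prove the coincidence with the Stefan-Maxwell fluxes when $\sum_iY_i=1$. At such a point the regularization factor equals $1$, so $\overline{f}_{ij}=f_{ij}$ and $a_{ij}=\tilde{a}_{ij}$, and \eqref{e3.48b} coincides with \eqref{e3.40}. Hence $\tilde{\textbf{F}}_i$ equals the flux $\textbf{F}_i$ produced by \eqref{e3.30} with $\textbf{P}_i=-Y_M^2\boldsymbol{\nabla} X_i$, which by Theorem \ref{t3.1} solves \eqref{e3.28}-\eqref{e3.29}. The global identity \eqref{e2.9}, equivalent to \eqref{e3.48}, follows from \eqref{e3.46}, noting that $\sum_i\textbf{P}_i=-Y_M^2\sum_i\boldsymbol{\nabla} X_i=0$ because $\sum_iX_i=1$ (which holds regardless of whether $\sum_iY_i=1$); the multiplicative regularization preserves this linear dependence, so \eqref{e3.48} holds pointwise on $\Omega$.

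Finally I would verify \eqref{e2.10} and \eqref{e2.11}. For $i\neq j$, property \eqref{e3.35} shows that the polynomial $g_{ij}$ carries a $Y_i$ factor, inherited by $\overline{f}_{ij}$. Substituting into \eqref{e3.47} and collecting, one obtains $a_{ij}=Y_ib_{ij}$ with $b_{ij}$ continuous. For the diagonal entry, using $Y_M=\sum_\ell Y_\ell/M_\ell$, a direct rearrangement gives
\begin{equation*}
a_{ii}=\frac{\overline{f}_{ii}}{M_i}\sum_{\ell\neq i}\frac{Y_\ell}{M_\ell}-\frac{Y_i}{M_i}\sum_{\ell\neq i}\frac{Y_\ell}{M_\ell}\tilde{f}_{i\ell}=b^0_i+Y_ib^1_i,
\end{equation*}
with $b^0_i=(\overline{f}_{ii}/M_i)\sum_{\ell\neq i}Y_\ell/M_\ell\geq 0$, by \eqref{e3.36}, the sign of $\overline{f}_{ii}$ being preserved because the regularizing factor is positive. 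The main obstacle is not analytical but bookkeeping: one must check that the factorization \eqref{e3.35} and the nonnegativity \eqref{e3.36} survive the common-denominator step that produces the $g_{ij}$ and then the regularization that produces the $\overline{f}_{ij}$. Both points are immediate once one notices that the regularization multiplies numerator and denominator by a strictly positive quantity and does not disturb polynomial $Y_i$-factorability of numerators.
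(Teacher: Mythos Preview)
Your proposal is correct and follows essentially the same route as the paper: Theorem \ref{t3.2} is stated there as a summary of the constructions in Sections \ref{ss3.2}--\ref{ss3.3}, and you have simply reorganized those constructions (Cramer representation \eqref{e3.34}, factorization \eqref{e3.35}, sign \eqref{e3.36}, regularization by $h/(h+(\sum Y_\ell-1)^2)$, and the change of variables \eqref{e3.47}) around the three assertions. Your explicit splitting of $a_{ii}$ into $b^0_i+Y_ib^1_i$ via cancellation of the $\ell=i$ terms in \eqref{e3.47} is a slightly more detailed verification of \eqref{e2.11} than the paper gives, but the argument is the same.
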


The generalized fluxes $\tilde{\textbf{F}}_i$ are the ones we consider in equation \eqref{e2.4} and we will now denote them by $\textbf{F}_i$  for the sake of simplicity. However note that they coincide with the solutions of the Stefan-Maxwell equations given by Theorem 3.1 only if $\sum^N_{i=1} Y_i=1$.

There remains to derive the property \eqref{e2.12}.

\subsection{The property \eqref{e2.12}}\label{ss3.4}\hspace*{\fill} \\
We are now given $N$ functions $Y_1,\ldots , Y_N$ belonging to $H^1(\Omega)$ such that $0\leq Y_i(x)\leq 1$ and $\sum^N_{j=1} Y_j(x) = 1$ for a.e. $x\in\Omega$. 

Let us first assume that $x\in\Omega$ is such that $Y_i(x)>0 \; \forall i$ and $\sum^N_{i=1}Y_i(x)=1$.  Then, in view of Theorem 3.1 (i), the fluxes $\textbf{F}_i$ read $\textbf{F}_i=Y_i\textbf{V}_i,$ where $\textbf{V}_i$ is the solution of \eqref{e3.26} with $\textbf{P} = -Y^2_M \boldsymbol{\nabla} \boldsymbol{X}$, that is
\begin{equation}\label{e3.48d}
C(\boldsymbol{Y})\textbf{V} =-Y^2_M\boldsymbol{\nabla} \boldsymbol{X}.
\end{equation}
  Recalling the definition \eqref{e2.13} for $\mu_i$, we have
\begin{equation}\label{e3.48a}
\begin{split}
-\sum^N_{i=1}\textbf{F}_i\cdot\boldsymbol{\nabla}\mu_i &=-\sum^N_{i=1}\frac{Y_i}{M_iX_i}\boldsymbol{V}_i\cdot \boldsymbol{\nabla} X_i,\\
&=-Y_M\sum^N_{i=1}\boldsymbol{V}_i\cdot\boldsymbol{\nabla} X_i,\text{ with \eqref{e3.12}},\\
&= \frac{1}{Y_M}\sum^N_{i,j=1} C_{ij}(\boldsymbol{Y})\textbf{V}_j\cdot\textbf{V}_i,\text{ thanks to \eqref{e3.48d}}.
\end{split}
\end{equation}
Since $\sum^N_{j=1}Y_j(x)=1,$ the coercivity property \eqref{e3.25a} for the matrix $C(\boldsymbol{Y})$ reads 
$$\sum^N_{i,j=1} C_{ij}(\boldsymbol{Y})\textbf{V}_j\geq\gamma \left(\sum^N_{i=1}Y_i|\textbf{V}_i|^2\right),\enspace \text{ with }\gamma = \underline{d}',$$
and the bounds \eqref{e3.14} for $Y_M$ hold true. Therefore we infer from   \eqref{e3.48a}  that
\begin{equation}\label{e3.49}
-\sum^N_{i=1}\textbf{F}_i\cdot\boldsymbol{\nabla}\mu_i\geq\gamma \underline{M}\sum^N_{i=1}Y_i|\boldsymbol{V}_i|^2.
\end{equation}
Next, recall that $\textbf{V}_i$ is also the solution of  \eqref{e3.19} with $\textbf{P}_i =- Y^2_M\boldsymbol{\nabla} X_i$ so :
\begin{equation*}
-Y^2_M\boldsymbol{\nabla} X_i=\sum^N_{j=1}B_{ij}(\boldsymbol{Y})\boldsymbol{V}_j,
\end{equation*}
In view of the definition \eqref{e3.20} of $B_{ij}$ and using again \eqref{e3.14}, we find
\begin{equation}\label{e3.48c}
\begin{split}
|\boldsymbol{\nabla} X_i|&\leq \overline{M}\ ^2\sum^N_{j=1} |B_{ij}(\boldsymbol{Y})||\boldsymbol{V}_j|,\\
&\leq\overline{d}'\;\overline{M}\ ^2
\left\{
\sum^N_{k=1;k\not= i}Y_iY_k|\boldsymbol{V}_i|+\sum^N_{j=1;j\not= i}Y_iY_j|\boldsymbol{V}_j|
\right\},\\
&\leq\overline{d}' \;\overline{M}^2
Y_i^{1/2} \left(\sum^N_{k=1}Y_k|\boldsymbol{V}_k|^2\right)^{1/2} \left\{
\left(\sum^N_{k=1;k\not= i}Y_k\right)^2 + \sum^N_{j=1;j\not= i}Y_iY_j
\right\}^{1/2},\\
&\leq \overline{d}'\;\overline{M}^2
Y_i^{1/2}(1-Y_i)^{1/2}\left(\sum^N_{k=1}Y_k|\boldsymbol{V}_k|^2\right)^{1/2}.
\end{split}
\end{equation}
Thanks to the relations \eqref{e3.17} between $\boldsymbol{\nabla}\boldsymbol{X}$ and $\boldsymbol{\nabla}\boldsymbol{Y}$, we infer from \eqref{e3.48c} that
\begin{equation}\label{e3.50}
|\boldsymbol{\nabla}\boldsymbol{Y}|^2 = \sum^N_{i=1}|\boldsymbol{\nabla} Y_i|^2\leq c^2\left(\sum^N_{i=1}Y_i|\boldsymbol{V}_i|^2\right),
\end{equation}
where $c$ is an appropriate constant depending on $N, \overline{d}', \widetilde{M}, \overline{M}$. 

We conclude by combining \eqref{e3.49} and \eqref{e3.50}.  This provides
\begin{equation*}
-\sum^N_{i=1}\textbf{F}_i\cdot\boldsymbol{\nabla}\mu_i\geq c_1|\boldsymbol{\nabla}\boldsymbol{Y}|^2.
\end{equation*}
\vskip0.1in

Now, assume that $Y_{k+1}= \ldots = Y_N=0$ and $Y_1,\ldots ,Y_k>0$ at some $x\in\Omega$ and for some $k\geq 1.$  Then, for a.e. such $x\in\Omega,$ since $Y_i\in H^1(\Omega),$ we have
\begin{equation*}
\boldsymbol{\nabla} Y_i(x)=0,\enspace\boldsymbol{\nabla} X_i(x)=0,\enspace i=k+1,\ldots,N
\end{equation*}
so that $\textbf{P}_{k+1} = \ldots =\textbf{P}_N =0$. Therefore, in view of Theorem 3.1 (ii), $\textbf{F}_{k+1} = \ldots =\textbf{F}_N =0$ while $\textbf{F}_{1}, \ldots ,\textbf{F}_k$ are the solutions of \begin{equation*}
\left(\sum^k_{j=1;j\not= i}d'_{ij}Y_j +\gamma Y_i\right) \textbf{F}_i-Y_i\sum^k_{j=1;j\not= i}\left(d'_{ij} -\gamma\right)\textbf{F}_j=-Y^2_M\boldsymbol{\nabla} X_i, \enspace i=1,\ldots,k.
\end{equation*}
As already noticed, this system is similar to the previous one when all $Y_i$ are positive.  Therefore, with computations similar to the ones above, we find
\begin{equation*}
-\sum^N_{i=1}\textbf{F}_i\cdot\boldsymbol{\nabla}\mu_i\mathbbm{1}_{\left\{Y_i>0\right\}}= - \sum^k_{i=1} \textbf{F}_i\cdot\boldsymbol{\nabla}\mu_i \geq c_1\left(\sum^k_{j=1}|\boldsymbol{\nabla} Y_j|^2\right) = c_1\left(\sum^N_{j=1}|\boldsymbol{\nabla} Y_j|^2\right).
\end{equation*}
The above inequalities valid for various values of $k$ provide \eqref{e2.12}.

\begin{rem}\label{r3.1}
Note that our method of solutions of the Stefan-Maxwell equations reducing first the problem to the inversion of a symmetric positive definite matrix (the matrix  $C(\boldsymbol{Y})$ in \eqref{e3.25}-\eqref{e3.26}) is closely related to the one in  \cite{Gio90}, \cite{Gio91}. However our presentation above, contains some additional developments that are new to the best of our knowledge, in particular the generalized definition of the fluxes when all the $Y_i$ vanish, and the properties of these generalized fluxes, including the property (2.12).
\end{rem}

\subsection{The three species case}\label{ss3.5}\hspace*{\fill} \\
We conclude this Section \ref{s3} by studying explicitly the three species case $(N=3)$ which is of interest, as it includes for instance the evolution of ozone when the three species are atomic oxygen, molecular oxygen and ozone $(O, O_2, O_3$ respectively, see Appendix B in \cite{MMT93}).

The matrix $B(\boldsymbol{Y})$ in \eqref{e3.20} is written
\begin{equation}\label{e3.51}
B=
\left ( \begin{array}{ccc}
b+c & -c & -b \\
-c & a+c & -a \\
-b & -a & a+b \end{array} \right),
\end{equation}
where 
\begin{equation}\label{e3.51a}
a=d'_{23}Y_2Y_3, \enspace b=d'_{13}Y_1Y_3,\enspace c=d'_{12}Y_1Y_2.
\end{equation}
\vskip0.1in

The resolution of \eqref{e3.18}-\eqref{e3.19} (or more precisely of \eqref{e3.28}-\eqref{e3.29}) is much simplified by observing that
\begin{equation*}
DB=\rho I-
\left(\begin{array}{ccc}
bc& ac & ab \\
bc & ac & ab \\
bc & ac & ab\end{array}\right),
\end{equation*}
where $D$ is the diagonal matrix $(a,b,c)$ and $\rho = ab+bc+ca$, hence in view of \eqref{e3.51a}:
\begin{equation}\label{e3.52}
\rho = Y_1Y_2Y_3\tilde{\rho},\enspace \text{ with } \tilde{\rho} = d'_{13}d'_{23}Y_3 + d'_{12}d'_{13}Y_1 +d'_{12}d'_{23}Y_2.
\end{equation}
Here, when the $Y_i$ are positive and at least one of them does not vanish, we have $\tilde{\rho} >0$.

Now, multiplying both sides of equation \eqref{e3.19} by $D$, we find
\begin{equation}\label{e3.54}
\rho\boldsymbol{V} - (\boldsymbol{\sigma},\boldsymbol{\sigma},\boldsymbol{\sigma})^T = (a\boldsymbol{P}_1, b\boldsymbol{P}_2, c\boldsymbol{P}_3)^T,
\end{equation}
where
$\enspace\boldsymbol{\sigma} = bc\boldsymbol{V}_1 +ac\boldsymbol{V}_2+ab\boldsymbol{V}_3.
$
Taking the scalar product of \eqref{e3.54} with $\boldsymbol{Y}$ and using $\sum^3_{i=1}Y_i\boldsymbol{V}_i = 0$, we find
\begin{equation*}
\boldsymbol\sigma = -\left( \sum^3_{i=1}Y_i \right)^{-1}\quad (aY_1\boldsymbol{P}_1+bY_2\boldsymbol{P}_2+cY_3\boldsymbol{P}_3),
\end{equation*}
so that
\begin{equation}\label{e3.55}
\boldsymbol{V} = \frac{1}{\rho}
\left(\begin{matrix}
\boldsymbol{\sigma} +a\boldsymbol{P}_1\\
\boldsymbol{\sigma} +b\boldsymbol{P}_2\\
\boldsymbol{\sigma} +c\boldsymbol{P}_3
\end{matrix}\right).
\end{equation}
We recover that $\boldsymbol{V}$ is only defined when all $Y_i$ are strictly positive. However, recalling that $\boldsymbol{F}_i=Y_i\boldsymbol{V}_i$, we have:
\begin{equation}\label{e3.55}
\left(\begin{matrix}
\boldsymbol{F}_1\\
\boldsymbol{F}_2\\
\boldsymbol{F}_3
\end{matrix}\right) =-\frac{1}{\tilde{\rho}}\frac{1}{\sum^3_{i=1}Y_i}
\left(\begin{matrix}
d'_{23}Y_1\boldsymbol{P}_1 + d'_{13}Y_1\boldsymbol{P}_2+d'_{12}Y_1\boldsymbol{P}_3\\
d'_{23}Y_2\boldsymbol{P}_1 + d'_{13}Y_2\boldsymbol{P}_2+d'_{12}Y_2\boldsymbol{P}_3\\
d'_{23}Y_3\boldsymbol{P}_1 + d'_{13}Y_3\boldsymbol{P}_2+d'_{12}Y_3\boldsymbol{P}_3
\end{matrix}\right)
+
\frac{1}{\tilde{\rho}}
\left(\begin{matrix}
d'_{23}\boldsymbol{P}_1\\
d'_{13}\boldsymbol{P}_2\\
d'_{12}\boldsymbol{P}_3
\end{matrix}\right),
\end{equation}
which is defined when $Y_i \geq 0$ and not all of the $Y_i$ vanish. This gives the explicit form of the coefficients $f_{ij}$ in \eqref{e3.34}. We recover that they are rational functions defined and continuous on $\mathbb{R}^N_+\backslash\left\{(0,\ldots ,0)\right\}$ and that they satisfy the properties \eqref{e3.35} and \eqref{e3.36}.  

Setting $\textbf{P}_i = -Y^2_M \boldsymbol{\nabla} X_i$, and expressing the $\boldsymbol{\nabla} X_i$ in terms of the $\boldsymbol{\nabla} Y_j$ provide the coefficients in \eqref{e3.40} ; the properties  \eqref{e3.42} and \eqref{e3.43} follow as well.

\vskip0.1in

\section{The Chemistry System}\label{s4}

The aim of this section is to study the problem \eqref{e2.4} and to prove Theorem \ref{t2.1}.  For that purpose, we first introduce a modified problem depending on a parameter $\varepsilon >0$ for which we obtain an existence result.  Then we derive the existence of a solution of \eqref{e2.4} by taking the limit $\varepsilon\rightarrow 0$.

\subsection{The modified equations}\label{ss4.1}\hspace*{\fill} \\
We first modify and extend the coefficients $a_{ij}$ to be defined on $\mathbb{R}^N$ by setting
\begin{equation}\label{e4.1}
\hat{a}_{ij}(Y_1,\ldots Y_N) = \xi\left(\sum^N_{\ell =1}|Y_\ell |\right) a_{ij}(Y^+_1,\ldots , Y^+_N),\quad 1\leq i,j\leq N, \quad Y_k\in\mathbb{R},
\end{equation}
where $\xi :\mathbbm{R}_+\rightarrow\mathbbm{R}_+$ is a continuous function such that
$$\xi(s) =
s\text{ if }0\leq s\leq 1,\enspace \xi(s) \in [0,1] \text{ if }1\leq s\leq 2,\enspace
\xi(s) = 0\text{ if }s\geq 2.$$
Clearly, the $\hat{a}_{ij}$ are continuous bounded functions. The same is true for the $\omega_i$  given by \eqref{e4.2} and we set
\begin{equation}\label{e4.3}
K_1 = \max_{i,j}\sup_{\mathbb{R}^N} |\hat{a}_{ij}|,\quad K_2 = \max_i\sup_{\mathbb{R}^{N+1}}|\omega_i|.
\end{equation}

For $q>2$ fixed and for $\varepsilon >0$ fixed (which we will let converge to zero later on), we consider the following modified form of \eqref{e2.4}:
\begin{equation}\label{e4.4}
\begin{split}
\frac{\partial Y_i}{\partial t} + (\boldsymbol{v}\cdot\boldsymbol{\nabla}) Y_i & + \boldsymbol{\nabla}\cdot\boldsymbol{\hat{F}}_i\\
& - \varepsilon\boldsymbol{\nabla} \cdot (|\boldsymbol{\nabla}\boldsymbol{Y}|^{q-2}\boldsymbol{\nabla} Y_i) = \omega_i(\theta, Y_1,\ldots Y_N), \enspace 1\leq i\leq N.
\end{split}
\end{equation}
Here, $Y_i=Y_{i,\varepsilon}$ depends of course on $\varepsilon,$ but we omit to denote this dependence as long as $\varepsilon$ is kept fixed.  Also, $|\boldsymbol{\nabla} \boldsymbol{Y}|^2 = \sum^N_{j=1} |\boldsymbol{\nabla} Y_j|^2$ and
  \begin{equation}\label{e4.4b}
  \hat{\boldsymbol{F}}_i=-\sum^N_{j=1}\hat{a}_{ij} (Y_1,\ldots ,Y_N)\boldsymbol{\nabla}Y_j,
  \end{equation}
 where $\hat{a}_{ij}$ is given by \eqref{e4.1}. We supplement \eqref{e4.4} with the same boundary and initial conditions as before, namely \eqref{e2.22} and \eqref{e2.25} except that \eqref{e2.22}$_2$ is replaced by
\begin{equation}\label{e4.4a}
\boldsymbol{\nu}\cdot (\hat{\boldsymbol{F}}_i - \varepsilon |\boldsymbol{\nabla}\boldsymbol{Y}|^{q-2}\boldsymbol{\nabla}Y_i)=0\text{ on }\Gamma_h\cup\Gamma_\ell.
\end{equation}
To obtain the weak formulation of this problem we observe that $Y_i-Y^u_i$  vanishes at $x_n=0.$  Hence, upon multiplying \eqref{e4.4} by a smooth test function $z_i$ vanishing at $x_n=0,$ we obtain thanks to \eqref{e4.4a}:
\begin{equation}\label{e4.5}
\begin{split}
\int_\Omega\frac{\partial Y_i}{\partial t}&z_idx +\int_\Omega [(\boldsymbol{v}\cdot\boldsymbol{\nabla} )Y_i]z_idx+\sum^N_{j=1}\int_\Omega\hat{a}_{ij}(Y_1,\ldots ,Y_N)
\boldsymbol{\nabla} Y_j\cdot\boldsymbol{\nabla} z_idx\\
&+\varepsilon\int_\Omega |\boldsymbol{\nabla}\boldsymbol{Y}|^{q-2}\boldsymbol{\nabla} Y_i\cdot\boldsymbol{\nabla} z_idx = \int_\Omega \omega_i(\theta, Y_1,\ldots, Y_N)z_idx, \enspace 1\leq i\leq N.
\end{split}
\end{equation}
Let us introduce the Sobolev space $W^{1,q}(\Omega)$ and its subspace
\begin{equation*}
W^{1,q}_{\Gamma_{0}} (\Omega) = \left\{ z\in W^{1,q}(\Omega), \enspace z = 0\text{ at }x_n=0\right\}.
\end{equation*}
We denote by $(W^{1,q}_{\Gamma_{0}}(\Omega))'$ its dual, and $<\cdot ,\cdot >$ denotes the duality product between $W^{1,q}_{\Gamma_{0}}(\Omega)$ and its dual. Subsequently we replace in \eqref{e4.5}
\begin{equation*}
\int_\Omega\frac{\partial Y_i}{\partial t} z_idx\quad\text{ by }\quad\bigg<\frac{\partial Y_i}{\partial t}, z_i\bigg> .
\end{equation*}
\vskip0.1in

We now aim to prove the following existence result.
\begin{prop}\label{p3.1}
Under the assumptions of Theorem \ref{t2.1}, for $q>2$ and $\varepsilon >0$ given, problem \eqref{e4.4}, \eqref{e4.4a}, \eqref{e2.22}$_1$, \eqref{e2.25} possesses a solution
$\boldsymbol{Y}= (Y_1,\ldots ,Y_N)$ such that
\begin{equation}\label{e4.6}
Y_i\in L^\infty(0,T; L^2(\Omega))\cap L^q (0,T; W^{1,q}(\Omega)),
\end{equation}
\begin{equation}\label{e4.7}
\frac{\partial Y_i}{\partial t}\in L^{q{'}}(0,T; (W^{1,q}_{\Gamma_{0}}(\Omega))'), \enspace\text{ with }\frac{1}{q} + \frac{1}{q'}=1.
\end{equation}
\end{prop}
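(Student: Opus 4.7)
The plan is to use a Galerkin approximation and pass to the limit using monotone operator techniques. After homogenizing the Dirichlet data by setting $Z_i := Y_i - Y_i^u \in W^{1,q}_{\Gamma_0}(\Omega)$ (which is admissible since $Y_i^u$ is constant, so $\nabla Z_i = \nabla Y_i$), I would pick a basis $\{w_k\}_{k\ge 1}$ of $W^{1,q}_{\Gamma_0}(\Omega)$ that is orthonormal in $L^2(\Omega)$---for instance eigenfunctions of $-\Delta$ with Dirichlet data on $\Gamma_0$ and Neumann data on $\Gamma_h\cup\Gamma_\ell$, which are smooth on our rectangular domain---and look for Galerkin approximants $Z_i^m(t) = \sum_{k=1}^m c_{i,k}^m(t)\,w_k$ satisfying the corresponding finite-dimensional system. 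Since $\hat a_{ij}$ and $\omega_i$ are continuous and bounded on $\mathbb{R}^N$, and the $q$-Laplacian nonlinearity is locally Lipschitz on finite-dimensional subspaces, Cauchy--Peano yields local-in-time solutions.

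For the a priori estimates, I would test with $Z_i^m$ and sum over $i$. An integration by parts, using $\mathrm{div}\,\boldsymbol{v}=0$, $Z_i^m|_{\Gamma_0}=0$, $\boldsymbol{v}\cdot\boldsymbol{\nu}=0$ on $\Gamma_\ell$, and $\boldsymbol{v}\cdot\boldsymbol{\nu}=1$ on $\Gamma_h$, turns the convection term into the nonnegative boundary integral $\tfrac12\sum_i\int_{\Gamma_h}(Z_i^m)^2$. The $q$-Laplacian contributes the coercive $\varepsilon\|\nabla\boldsymbol{Z}^m\|_{L^q(\Omega)}^q$ (since $\nabla Y_i^m = \nabla Z_i^m$), while the $\hat a_{ij}$ term is bounded in absolute value by $K_1 N\|\nabla\boldsymbol{Z}^m\|_{L^2(\Omega)}^2$ which, by $L^q(\Omega)\hookrightarrow L^2(\Omega)$ (since $q>2$ and $\Omega$ is bounded) and Young's inequality, is absorbed as $\tfrac{\varepsilon}{2}\|\nabla\boldsymbol{Z}^m\|_{L^q}^q + C$. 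The $\omega_i$ contribution is trivial since $\omega_i$ is bounded. Gronwall then gives uniform bounds $Z_i^m\in L^\infty(0,T;L^2(\Omega))\cap L^q(0,T;W^{1,q}_{\Gamma_0}(\Omega))$ which extend the ODE solutions to $[0,T]$. The dual estimate $\partial_t Z_i^m \in L^{q'}(0,T;(W^{1,q}_{\Gamma_0})')$ follows by H\"older-bounding each term in \eqref{e4.5}---integrating the convection by parts first, so that only $\boldsymbol{v}\in L^2(0,T;H^1)$ and $Y_i^m \in L^\infty(0,T;L^2)$ are needed, together with Sobolev embeddings valid in dimension $n=2,3$.

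Aubin--Lions compactness then provides a subsequence $Z_i^m$ converging strongly to some $Z_i$ in $L^q(0,T;L^2(\Omega))$ and a.e.\ on $\Omega\times(0,T)$. The reaction, convection, and $\hat a_{ij}(\boldsymbol{Y}^m)\nabla Y_j^m$ terms pass to the limit routinely by combining a.e.\ convergence (with continuity and boundedness of $\hat a_{ij}$ and $\omega_i$) with the weak $L^q$ convergence of $\nabla\boldsymbol{Z}^m$. The main obstacle is the nonlinear regularization term $\varepsilon|\nabla\boldsymbol{Y}^m|^{q-2}\nabla Y_i^m$, since weak convergence of $\nabla\boldsymbol{Y}^m$ alone does not suffice to identify the nonlinear limit. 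I would resolve this via the classical Minty--Browder monotonicity argument: extract a weak $L^{q'}$ limit $\chi_i$ of $|\nabla\boldsymbol{Y}^m|^{q-2}\nabla Y_i^m$, establish the limiting energy identity by testing the limit equation with $Z_i$ itself, and exploit the monotonicity of the $q$-Laplacian operator to conclude $\chi_i = |\nabla\boldsymbol{Y}|^{q-2}\nabla Y_i$, thereby completing the existence proof with the stated regularity.
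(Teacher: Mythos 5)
Your proposal follows essentially the same route as the paper: the paper derives exactly your a priori estimate by testing with $Y_i-Y_i^u$ (turning the convection term into the nonnegative boundary integral over $\Gamma_h$, bounding the $\hat a_{ij}$ term by $K_1$ and the reaction term by $K_2$, and relying on the $\varepsilon$-coercive $q$-Laplacian term with $q>2$ to absorb the $L^2$ gradient term), then obtains the dual bound on $\partial_t Y_i$ from the weak formulation, and concludes by "Galerkin approximation and passage to the limit by monotonicity and compactness," citing Lions. You have simply made explicit the steps (choice of basis, Young's inequality absorption, Aubin--Lions, and the Minty--Browder identification of the $q$-Laplacian limit) that the paper leaves as standard.
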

\begin{rem}\label{r4.2}
We do not require any positivity property for the solutions of \eqref{e4.4}.  We will come back to this point later on.
\end{rem}
\begin{proof}
Existence is based on the methods of compactness and monotonicity (see e.g. J.L. Lions [Lio69]) and on the following a priori estimate (only valid for $\varepsilon >0$ fixed).

Replacing $z_i$ by $Y_i-Y^u_i$ in \eqref{e4.5}, we obtain
\begin{equation}\label{e4.8}
\begin{split}
\frac{1}{2}\frac{d}{dt}\int_\Omega &(Y_i-Y^u_i)^2 dx + \int_\Omega [(\boldsymbol{v}\cdot\boldsymbol{\nabla}) Y_i](Y_i-Y^u_i)dx\\
&+\sum^N_{j=1}\int_\Omega\hat{a}_{ij}(\boldsymbol{Y})\boldsymbol{\nabla} Y_j\cdot\boldsymbol{\nabla} Y_idx +\varepsilon\int_\Omega|\boldsymbol{\nabla}\boldsymbol{Y}|^{q-2}|\boldsymbol{\nabla} Y_i|^2dx\\
&=\int_\Omega\omega_i(\theta,\boldsymbol{Y})(Y_i-Y^u_i)dx.
\end{split}
\end{equation}
The different terms in \eqref{e4.8} can be estimated by making use of \eqref{e2.28} and \eqref{e4.3}.  We find
\begin{equation*}
\begin{split}
\int_\Omega[(\boldsymbol{v}\cdot\boldsymbol{\nabla})Y_i](Y_i-Y^u_i)dx &=\frac{1}{2}\int_{\partial\Omega} (\boldsymbol{v}\cdot\boldsymbol{\nu})(Y_i-Y^u_i)^2d\Gamma - \frac{1}{2}\int_\Omega (Y_i-Y^u_i)^2 (\text{div }\boldsymbol{v})dx\\
&=\frac{1}{2}\int_{\Gamma_{h}}(Y_i - Y^u_i)^2d\Gamma\geq 0.
\end{split}
\end{equation*}
Also, 
\begin{equation*}
\begin{split}
&\bigg|\sum^N_{j=1}\int_\Omega\hat{a}_{ij}(\boldsymbol{Y})\boldsymbol{\nabla} Y_j\cdot\boldsymbol{\nabla} Y_idx\bigg|\leq K_1\sum^N_{j=1}\int_\Omega |\boldsymbol{\nabla} Y_j||\boldsymbol{\nabla} Y_i|dx,
\\
&\bigg|\int_\Omega\omega_i(\theta,\boldsymbol{Y})(Y_i-Y^u_i)dx\bigg|\leq K_2\int_\Omega |Y_i-Y^u_i|dx.
\end{split}
\end{equation*}
Combining the above inequalities with \eqref{e4.8} and adding for $i=1, \ldots, N,$ we conclude that
\begin{equation}\label{e4.9}
\begin{split}
\frac{1}{2}\frac{d}{dt}\sum^N_{i=1}\int_\Omega (Y_i-Y^u_i)^2 dx &+\varepsilon\int_{\Omega}|\boldsymbol{\nabla}\boldsymbol{Y}|^qdx\leq NK_1\int_\Omega|\boldsymbol{\nabla}\boldsymbol{Y}|^2dx\\
&+K_2\left\{\sum^N_{i=1}\int_\Omega |Y_i-Y^u_i|dx\right\}.
\end{split}
\end{equation}
This inequality readily yields, for fixed $\varepsilon$, a priori bounds of $Y_i-Y^u_i$ in $L^\infty(0,T; L^2(\Omega))$ and $L^q(0,T;W^{1,q}_{\Gamma_{0}}(\Omega))$. 

Next, combining these bounds and the weak formulation \eqref{e4.5} provide a priori bounds of $\frac{\partial Y_i}{\partial t}$ in $L^{q'}(0,T; (W^{1,q}_{\Gamma_{0}}(\Omega))')$.

These estimates allow us to show the existence of a solution of \eqref{e4.4}, \eqref{e4.4a}, \eqref{e2.22}$_1$, \eqref{e2.25} thanks to standard arguments:  introduction of a Galerkin approximation and passage to the limit by monotonicity and compactness (see e.g. \cite{Lio69}, p. 207).
\end{proof}
As already noticed, we did not require any positivity property for the $Y_j,$ when formulating the problem \eqref{e4.4}.  We conclude this section by showing that in fact such properties hold for the solutions that we have obtained.
\begin{prop}\label{p4.2}
Under the assumptions of Theorem \ref{t2.1}, the solutions $Y_i$ of \eqref{e4.4}, \eqref{e4.4a}, \eqref{e2.22}$_1$, \eqref{e2.25} satisfy
\begin{equation}\label{e4.10}
0\leq Y_i(x,t)\leq 1, \text{ for }t\in [0,T]\text{ and a.e. } x\in\Omega,
\end{equation}
\begin{equation}\label{e4.11}
\sum^N_{j=1}Y_j(x,t)=1,\text{ for }t\in [0,T]\text{ and a.e. } x\in\Omega.
\end{equation}
\end{prop}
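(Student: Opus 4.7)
The plan is to prove the positivity $Y_i \geq 0$ first, via a Stampacchia-type test of the $i$th equation against the negative part $-Y_i^-$, and then to deduce $\sum_j Y_j = 1$ by summing \eqref{e4.4} and exploiting the structural identity \eqref{e3.48}; the upper bound $Y_i \leq 1$ is then automatic.

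For the positivity, I would fix $i$ and test \eqref{e4.5} against $z_i = -Y_i^-$, where $Y_i^- = \max(-Y_i, 0)$. Since $Y_i = Y_i^u > 0$ on $\Gamma_0$ by \eqref{e2.23}, the trace of $Y_i^-$ vanishes there, so $-Y_i^- \in W^{1,q}_{\Gamma_0}(\Omega)$. The time-derivative term becomes $\tfrac{1}{2}\tfrac{d}{dt}\|Y_i^-\|_{L^2}^2$ by the usual chain rule; the convective term, using $\mathrm{div}\,\boldsymbol{v} = 0$ and \eqref{e2.20}, contributes $\tfrac{1}{2}\int_{\Gamma_h}(Y_i^-)^2 d\Gamma \geq 0$; the $\varepsilon$-regularization gives $\varepsilon \int|\boldsymbol{\nabla}\boldsymbol{Y}|^{q-2}|\boldsymbol{\nabla} Y_i^-|^2 dx \geq 0$. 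The crucial point is the sign of the diffusion term: on $\{Y_i < 0\}$ one has $Y_i^+ = 0$, so \eqref{e4.1} together with \eqref{e2.10} gives $\hat{a}_{ij}(Y) = \xi(\sum_\ell|Y_\ell|)\, Y_i^+ b_{ij}(Y^+) = 0$ for $j\neq i$, while \eqref{e2.11} gives $\hat{a}_{ii}(Y) = \xi(\sum_\ell|Y_\ell|)\, b_i^0(Y^+) \geq 0$. Since $\boldsymbol{\nabla}(-Y_i^-) = \boldsymbol{\nabla} Y_i$ on $\{Y_i < 0\}$ and vanishes elsewhere, the full diffusion contribution reduces to $\int \hat{a}_{ii}(Y)|\boldsymbol{\nabla} Y_i^-|^2 dx \geq 0$. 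For the reaction term, on $\{Y_i < 0\}$ we have $\psi(Y_i) = 0$, so by the extension \eqref{e4.2} and \eqref{e2.15} one has $\omega_i(\theta,Y) = \alpha_i(\theta^+,\psi(Y)) \geq 0$, whence $\int \omega_i (-Y_i^-) dx \leq 0$. Assembling the identity yields $\tfrac{d}{dt}\|Y_i^-\|_{L^2}^2 \leq 0$, and $Y_i^-(\cdot, 0) = 0$ by \eqref{e2.27} then forces $Y_i^- \equiv 0$.

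With $Y_i \geq 0$ in hand, $Y^+ = Y$ a.e., and the identity \eqref{e3.48} applied pointwise yields $\sum_{i,j} \hat{a}_{ij}(Y)\boldsymbol{\nabla} Y_j = 0$, i.e.\ $\sum_i \hat{\boldsymbol{F}}_i = 0$ a.e. Summing \eqref{e4.4} over $i$ and using \eqref{e2.17}, the quantity $S := \sum_j Y_j - 1$ satisfies the homogeneous linear equation
\begin{equation*}
\partial_t S + (\boldsymbol{v}\cdot\boldsymbol{\nabla})S - \varepsilon\boldsymbol{\nabla}\cdot\bigl(|\boldsymbol{\nabla}\boldsymbol{Y}|^{q-2}\boldsymbol{\nabla} S\bigr) = 0,
\end{equation*}
with $S(\cdot, 0) = 0$ by \eqref{e2.27}, $S|_{\Gamma_0} = 0$ by \eqref{e2.23}, and the summed form of \eqref{e4.4a} reducing (using $\sum_i \hat{\boldsymbol{F}}_i = 0$) to $|\boldsymbol{\nabla}\boldsymbol{Y}|^{q-2}\boldsymbol{\nabla} S\cdot\boldsymbol{\nu} = 0$ on $\Gamma_h \cup \Gamma_\ell$. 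Testing against $S \in W^{1,q}_{\Gamma_0}(\Omega)$ and treating the convective term as above produces $\tfrac{d}{dt}\|S\|_{L^2}^2 \leq 0$, hence $S \equiv 0$ and $\sum_j Y_j = 1$; together with $Y_i \geq 0$ this gives $Y_i \leq 1$.

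The only real obstacle is the sign analysis of the diffusion term in the first step. It rests entirely on the deliberate design of \eqref{e4.1}: evaluating $a_{ij}$ at $Y^+$ rather than $Y$ transfers the factor $Y_i$ of \eqref{e2.10} to $Y_i^+$, so that $\hat{a}_{ij}$ vanishes on $\{Y_i < 0\}$ for $j\neq i$, while $b_i^0 \geq 0$ in \eqref{e2.11} preserves the correct sign of the diagonal. The truncation $\psi$ in the extension \eqref{e4.2} plays the analogous role for the reaction term. Once these algebraic observations are in place, the remainder is standard energy bookkeeping.
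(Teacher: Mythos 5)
Your proposal is correct and follows essentially the same route as the paper: the same Stampacchia test with $-Y_i^-$, the same sign analysis of the diffusion and reaction terms resting on \eqref{e4.1}, \eqref{e2.10}, \eqref{e2.11} and the truncation $\psi$ in \eqref{e4.2}, and the same summation argument using \eqref{e2.9}/\eqref{e3.48} to show $\mathcal{U}=\sum_j Y_j$ solves the homogeneous convection--diffusion equation. The only cosmetic difference is that you close the second step by testing $S=\mathcal{U}-1$ against itself, where the paper simply invokes uniqueness for that linear problem.
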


\begin{proof}  To derive the positivity, we set $z_i=-Y^-_i =\min(0,Y_i)\in W^{1,q}_{\Gamma_{0}}(\Omega)$ in \eqref{e4.5} and we find after some integrations by parts and upon using \eqref{e2.28}:
\begin{equation}\label{e4.12}
\begin{split}
\frac{1}{2}\frac{d}{dt}\int_\Omega (Y^-_i)^2 dx &+\frac{1}{2}\int_{\Gamma_{h}}(Y^-_i)^2d\Gamma - \sum^N_{j=1}\int_\Omega\hat{a}_{ij}(\boldsymbol{Y})\boldsymbol{\nabla} Y_j
\cdot\boldsymbol{\nabla} Y^-_idx\\
&+\varepsilon\int_\Omega |\boldsymbol{\nabla}\boldsymbol{Y}|^{q-2}|\boldsymbol{\nabla} Y^-_i|^2 dx =-\int_\Omega\omega_i(\theta,\boldsymbol{Y})Y^-_idx.
\end{split}
\end{equation}
Now at each point $(x,t)$ such that $Y_i(x,t)\leq 0,$ the definition \eqref{e4.1} of $\hat{a}_{ij}$ together with the assumptions \eqref{e2.10}, \eqref{e2.11} guarantee that
\begin{equation*}
-\sum^N_{j=1}\hat{a}_{ij}(Y_1, \ldots, Y_N)\boldsymbol{\nabla} Y_j\cdot\boldsymbol{\nabla} Y^-_i = \xi \left(\sum^N_{\ell=1}|Y_\ell |\right) b^0_i(Y^+_1,\ldots , Y^+_N)|\boldsymbol{\nabla} Y^-_i|^2\geq 0.
\end{equation*}
while, the definition \eqref{e4.2} of the extended $\omega_i$ together with the assumptions \eqref{e2.15}, \eqref{e2.16} provide that:
\begin{equation*}
\omega_i(\theta, Y_1,\ldots, Y_N) Y^-_i=\alpha_i(\theta^+, \psi(Y_1),\ldots ,\psi(Y_N))Y^-_i \geq 0.
\end{equation*}
Therefore we infer from  \eqref{e4.12} that
\begin{equation*}
\frac{1}{2}\frac{d}{dt}\int_\Omega (Y^-_i)^2 dx\leq 0,
\end{equation*}
which on integrating yields, due to the positivity of the initial data (cf. \eqref{e2.27}):
\begin{equation}\label{e4.15}
Y_i(x,t)\geq 0\text{ for }t\in[0,T]\text{ and a.e. } x\in\Omega .
\end{equation}

Consequently, recalling  again the definition \eqref{e4.1}, we now have 
$$\hat{a}_{ij}(Y_1, \ldots, Y_N) = \xi (\sum^N_{\ell=1}Y_\ell)\enspace a_{ij} (Y_1,\ldots ,Y_N)$$ 
so that
\begin{equation}\label{e4.16aa}
\sum^N_{i=1}\boldsymbol{\hat{F}}_i=-\xi (\sum^N_{\ell=1}Y_\ell) \left[\sum^N_{i,j=1}a_{ij}(Y_1,\ldots ,Y_N)\boldsymbol{\nabla} Y_j\right]=0,
\end{equation}
thanks to \eqref{e2.9}.

We aim now to  derive \eqref{e4.11}. Let us add the equations \eqref{e4.4} for $i=1,...,N$. By  \eqref{e4.16aa}, the sum of the fluxes vanishes while the property \eqref{e2.17} still holds for the extended non linearities $\omega_i$. Consequently, $\mathcal{U} = \sum^N_{i=1} Y_i$ satisfies:
\begin{equation}\label{e4.16ab}
\frac{\partial \mathcal{U}}{\partial t} +(\boldsymbol{v}\cdot\boldsymbol{\nabla})\mathcal{U} -\varepsilon\boldsymbol{\nabla}\cdot \left[|\boldsymbol{\nabla} Y|^{q-2}\boldsymbol{\nabla}
\mathcal{U}\right]=0.
\end{equation}
In view of the boundary conditions for the $Y_i$, we have $\mathcal{U}=1\text{ on }\Gamma_0$ while, on $\Gamma_h\cup\Gamma_\ell$, by adding the conditions \eqref{e4.4a} and using again \eqref{e4.16aa}, we see that:
\begin{equation*}
\varepsilon |\nabla Y|^{q-2}\frac{\partial\mathcal{U}}{\partial\boldsymbol{\nu}} = 0
\end{equation*}
which guarantees that $\frac{\partial\mathcal{U}}{\partial\boldsymbol{\nu}} = 0\text{ on }\Gamma_h\cup\Gamma_\ell$. This gives readily \eqref{e4.11} since the linear equation \eqref{e4.16ab} possesses a unique solution satisfying the above boundary conditions together with $\mathcal{U}=1$ at $t=0.$

This concludes the proof of Proposition \ref{p4.2}, since \eqref{e4.11} together with the positivity of the $Y_i$ provide that
$Y_i(x,t)\leq 1\text{ for }t\in [0,T]\text{ and a.e. }x\in\Omega$. 
\end{proof}

It is worth noting that, since $Y_i(x,t)\geq 0$ and $\sum^N_{i=1}Y_i(x,t)=1$ a.e., we have $\hat{a}_{ij}(\boldsymbol{Y})=a_{ij}(\boldsymbol{Y})$  so that 
\eqref{e4.4} now reads 
 \begin{equation}\label{e4.16}
\begin{split}
\frac{\partial Y_i}{\partial t}+(\boldsymbol{v}\cdot\boldsymbol{\nabla})Y_i&-\sum^N_{j=1}\boldsymbol{\nabla}\cdot (a_{ij}(Y_1,\ldots ,Y_N)\boldsymbol{\nabla} Y_j)\\
&-\varepsilon\boldsymbol{\nabla}\cdot (|\boldsymbol{\nabla}\boldsymbol{Y}|^{q-2}\boldsymbol{\nabla} Y_i)=\omega_i(\theta, Y_1,\ldots ,Y_N).
\end{split}
\end{equation}
Also, the fluxes in \eqref{e4.16} are indeed the solutions of the Stefan Maxwell equations (see Theorem 3.2).

\vskip0.1in

\subsection{The energy equation}\label{ss4.2}\hspace*{\fill} \\
We aim now to prove Theorem \ref{t2.1}.  The solution of \eqref{e2.4}, \eqref{e2.22}, \eqref{e2.25} will be obtained by taking the limit $\varepsilon\rightarrow 0$ in \eqref{e4.16}. For that purpose we need a priori estimates independent of $\varepsilon$ for the solutions of this problem (we still omit to denote the dependence of $Y_i$ on  $\varepsilon$ to make notations simpler).

As mentioned in the introduction, for the original problem \eqref{e2.4}, assuming that the $Y_i\enspace (X_i)$ do not vanish, the natural Gibbs energy equation is obtained by multiplying equations \eqref{e2.4} by $\mu_i=\frac{1}{M_i}\log X_i$ and adding for $i=1,\ldots, N.$  More precisely, in view of the boundary conditions, we should multiply \eqref{e2.4} by
\begin{equation}\label{e4.16a}
\mu_i-\mu^u_i = \frac{1}{M_i}(\log X_i - \log X^u_i),
\end{equation}
with
\begin{equation}\label{e4.16b}
X^u_i =\frac{Y^u_i}{M_iY^u_M},\enspace Y^u_M =\sum^N_{i=1}\frac{Y^u_i}{M_i}.
\end{equation}
The $\mu_i$ (resp. $\mu^u_i$) can be expressed in terms of the $Y_j$ (resp. $Y^u_j$) by using the $\boldsymbol{X}-\boldsymbol{Y}$ relations \eqref{e3.12}:
\begin{equation}\label{e4.16ba}
\mu_i =\frac{1}{M_i}\log \frac{Y_i}{M_iY_M} = \frac{1}{M_i}\log\frac{Z_i}{\sum^N_{j=1}Z_j} \text{ with }   Z_i=Y_i/M_i.
\end{equation}
(resp. $Z^u_i=Y^u_i/M_i$).

\vskip0.1in

From a mathematical point of view, since the $Y_i$ might vanish, we introduce a parameter $\eta >0,$ and, instead of $\mu_i$, consider: 
\begin{equation}\label{e4.16bb}
\mu^\eta_i=\frac{1}{M_i}\log\frac{Z^\eta_i}{\sum^N_{j=1}Z^\eta_j}, \quad Z^\eta_i=\frac{Y_i+\eta}{M_i},
\end{equation}
with a similar definition for $\mu^{u,\eta}_i$.

We multiply the equations \eqref{e4.16} by $\mu^\eta_i -\mu^{u,\eta}_i$, integrate over $\Omega$ and add for $i=1,\ldots ,N$. For the term involving the time derivatives, we observe that
\begin{equation}\label{e4.16bc}
\mu^\eta_i - \mu^{u,\eta}_i =\frac{\partial}{\partial Y_i} g^\eta (Y_1,\ldots Y_N),
\end{equation}
where
\begin{equation}\label{e4.16bd}
g^\eta(Y_1,\ldots ,Y_N) = \sum^N_{j=1}Z^\eta_j \left[\log\frac{Z^\eta_j}{\sum^N_{\ell=1}Z^\eta_\ell} -\log\frac{Z^{u,\eta}_j}{\sum^N_{\ell=1}Z^{u,\eta}_\ell}\right].
\end{equation}
Hence
\begin{equation*}
\sum^N_{i=1}\bigg<\frac{\partial Y_i}{\partial t}, \mu^\eta_i -\mu^{u,\eta}_i\bigg> =\sum^N_{i=1}\int_\Omega\frac{\partial g^\eta}{\partial Y_i}
\frac{\partial Y_i}{\partial t}dx,
\end{equation*}
and
\begin{equation}\label{e4.16c}
\sum^N_{i=1}<\frac{\partial Y_i}{\partial t}, \mu^\eta_i -\mu^{u,\eta}_i> =\frac{d}{dt}\int_\Omega g^\eta(Y_1,\ldots Y_N)dx.
\end{equation}
Note that $g^\eta$ is bounded independently of $\eta \in ]0,1[$ for bounded values of $Z^\eta_j$ ($0 \leq Z^\eta_j \leq 2/M_j$ in our case).  Note also that \eqref{e4.16c} proven as if the $Y_i$ were smooth can be proven by approximation for the actual functions $Y_i,$ observing that
\begin{equation}\label{e4.16d}
\begin{cases}
\frac{\partial Y_i}{\partial t}\in L^{q'}(0,T; (W^{1,q}_{\Gamma_{0}}(\Omega))' )\text{ and }\\
&\\
\mu^\eta_i - \mu^{u,\eta}_i\in L^q(0,T; W^{1,q}_{\Gamma_{0}}(\Omega)).
\end{cases}
\end{equation}
A similar remark applies to several of the following terms.

Next, concerning the contribution of the convective terms to the energy equation,   we write
\begin{equation*}
\begin{split}
\sum^N_{i=1}\int_\Omega [(\boldsymbol{v}\cdot&\boldsymbol{\nabla})Y_i] (\mu^\eta_i-\mu^{u,\eta}_i)=\sum^N_{j=1}\int_\Omega v_j\frac{\partial}{\partial x_j}g^\eta(\boldsymbol{Y})dx\\
&=\int_{\partial\Omega} (\boldsymbol{v}\cdot\boldsymbol{\nu})g^\eta(\boldsymbol{Y})d\Gamma -\int_\Omega \textrm{ div }\boldsymbol{v}\, g^\eta(\boldsymbol{Y})dx=\int_{\Gamma_{h}}g^\eta(\boldsymbol{Y})d\Gamma,
\end{split}
\end{equation*}
as $\text{ div }\boldsymbol{v} = 0$, $g^\eta(\boldsymbol{Y}) = 0$ at $x_n=0$ and in view of the boundary conditions for $\boldsymbol{v}$.

\vskip0.1in

Performing also some integration by parts in the integrals related to the diffusive terms and nonlinear Laplacian, our energy equation reads
\begin{equation}\label{e4.16ff}
\begin{split}
 \frac{d}{dt} \int_\Omega g^\eta & (\boldsymbol{Y})dx + \int_{\Gamma_{h}}g^\eta(\boldsymbol{Y})d\Gamma +\sum^N_{i,j=1}\int_\Omega a_{ij}(\boldsymbol{Y})
\boldsymbol{\nabla} Y_j\cdot\boldsymbol{\nabla} \mu^\eta_idx  \\
&+\sum^N_{i=1}\varepsilon\int_\Omega |\boldsymbol{\nabla}\boldsymbol{Y}|^{q-2}\boldsymbol{\nabla} Y_i\cdot\boldsymbol{\nabla} \mu^\eta_idx=\sum^N_{i=1}\int_\Omega\omega_i(\theta,\boldsymbol{Y})(\mu^\eta_i-\mu^{u,\eta}_i)dx.
\end{split}
\end{equation}

\vskip0.1in

We now aim  and to pass to the limit $\eta\rightarrow 0$ in \eqref{e4.16ff}. We plan in this way to obtain estimates independent of $\varepsilon$ for the $Y_i=Y_{i,\varepsilon}$. Note that $\mu^\eta_i$ is singular when $\eta \rightarrow 0$ if $Y_i = 0$ but as we will see below this singularity is usually absorbed by other factors.

\subsection{Passage to the limit $\eta\rightarrow 0$}\hspace*{\fill} \\
We first observe that the right hand-side of \eqref{e4.16ff} is bounded from above independently of $\eta \in ]0,1[$ and $\varepsilon >0$. Indeed recalling the decomposition \eqref{e2.15} of $\omega_i$, we have:
\begin{equation}\label{e4.16h}
\omega_i(\theta,\boldsymbol{Y})(\mu^\eta_i-\mu^{u,\eta}_i)=\alpha_i(\theta,\boldsymbol{Y})\mu^\eta_i - \beta_i(\theta,\boldsymbol{Y})Y_i\mu^\eta_i - \omega_i(\theta,\boldsymbol{Y})\mu^{u,\eta}_i.
\end{equation}
Here, the assumption \eqref{e2.16} together with the definition \eqref{e4.16bb} of 
$\mu^{\eta}_i$ guarantee that 
$\alpha_i(\theta,\boldsymbol{Y})\mu^\eta_i \leq 0$
 while, by \eqref{e2.18}, 
$\beta_i$ and $\omega_i$ are bounded functions. Next $\mu^{\eta}_i$ reads
\begin{equation}\label{e4.16fk}
\mu^\eta_i=\frac{1}{M_i}\log\frac{Y_i+\eta}{M_iY^\eta_M},\enspace\text{ with} \enspace
Y^\eta_M=\sum^N_{j=1}Z^\eta_j.
\end{equation}
Since $Y_i\geq 0$ and $\sum^N_{j=1}Y_j=1$ a.e., the lower bound \eqref{e3.14} holds true and $Y^\eta_M$ is bounded from below:
$$Y^\eta_M \geq Y_M \geq \frac{1}{\overline{M}}.$$
Consequently  the quantities $Y_i\mu^\eta_i$
are bounded independently of $0<\eta<1$ and $\varepsilon$ while, since all the $Y^u_i$ are strictly positive,
the constants $\mu^{u,\eta}_i$ are also bounded. 

Also, recall that $g^\eta(\boldsymbol{Y})$ given by \eqref{e4.16bd} is bounded independently of $\eta \in ]0,1[$ and $\varepsilon >0$. Therefore, coming back to \eqref{e4.16ff} that we integrate on $(0,T)$, we conclude that there exists a constant $c_2$ independent of $0<\eta<1$ and $\varepsilon$ such that:
\begin{equation}\label{e4.16fg}
\sum^N_{i,j=1}\int_0^T\int_\Omega a_{ij}(\boldsymbol{Y})
\boldsymbol{\nabla} Y_j\cdot\boldsymbol{\nabla} \mu^\eta_idxds  +\sum^N_{i=1}\varepsilon\int_0^T\int_\Omega |\boldsymbol{\nabla}\boldsymbol{Y}|^{q-2}\boldsymbol{\nabla} Y_i\cdot\boldsymbol{\nabla} \mu^\eta_idxds \leq c_2.
\end{equation}

We now aim to take the limit $\eta\rightarrow 0$ in the two terms in the right hand-side of \eqref{e4.16fg}. It follows from \eqref{e4.16fk} that
\begin{equation}\label{e4.16fj}
\boldsymbol{\nabla}\mu^\eta_i = \frac{1}{M_i}\frac{\boldsymbol{\nabla} Y_i}{Y_i+\eta} -\frac{1}{M_i}\frac{\boldsymbol{\nabla} Y_M}{Y^\eta_M}.
\end{equation}
Hence, for the first term in \eqref{e4.16fg}, we can write:
\begin{equation}\label{e4.16e}
\begin{split}
\int_0^T &  \int_\Omega a_{ij}(\boldsymbol{Y})
\boldsymbol{\nabla} Y_j\cdot\boldsymbol{\nabla} \mu^\eta_idxds = \\
&\int_0^T\int_\Omega \left[ \frac{a_{ij}(\boldsymbol{Y})}{M_i(Y_i+\eta)}\boldsymbol{\nabla} Y_j\cdot\boldsymbol{\nabla} Y_i - \frac{a_{ij}(\boldsymbol{Y})}{M_iY^\eta_M}\boldsymbol{\nabla} Y_j\cdot\boldsymbol{\nabla} Y_M\right]dxds.
\end{split}
\end{equation}
We observe that all the integrands vanish a.e. when $Y_i=0$ since either $i\not= j$ and, by \eqref{e2.10}, $a_{ij} = 0$, or $i=j$ and $\boldsymbol{\nabla} Y_i$ vanishes (a.e.). Next, we can easily pass to the limit $\eta\rightarrow 0$ in the second integral of the right hand-side of \eqref{e4.16e} by using Lebesgue's theorem since  $Y^\eta_M$ is bounded from above and from below by positive constants (independent of $\eta$) and converges pointwise to $Y_M$ as $\eta\rightarrow 0$, while the other functions are integrable since $\boldsymbol{\nabla} Y_j\in L^q(0,T; L^q(\Omega)^n)$. Hence we obtain
\begin{equation*}
\begin{split}
\int_0^T\int_\Omega \frac{a_{ij}(\boldsymbol{Y})}{M_iY^\eta_M}\boldsymbol{\nabla} Y_j\cdot\boldsymbol{\nabla} Y_Mdxds \rightarrow \int_0^T\int_\Omega \mathbbm{1}_{\left\{Y_i>0\right\}}\frac{a_{ij}(\boldsymbol{Y})}{M_iY_M}\boldsymbol{\nabla} Y_j\cdot\boldsymbol{\nabla} Y_Mdxds.
\end{split}
\end{equation*}
For the first integral in \eqref{e4.16e}, we will use the properties \eqref{e2.10}, \eqref{e2.11} and distinguish the cases  $i\not= j$ and $i=j$. If $i\not= j$, in view of \eqref{e2.10},  we observe that :
\begin{equation*}
\frac{a_{ij}(\boldsymbol{Y})}{M_i(Y_i+\eta)}\boldsymbol{\nabla} Y_j\cdot\boldsymbol{\nabla} Y_i=\mathbbm{1}_{\left\{Y_i>0\right\}} \enspace \frac{b_{ij}(\boldsymbol{Y})}{M_i} \frac{Y_i}{Y_i+\eta}\boldsymbol{\nabla} Y_j\cdot\boldsymbol{\nabla} Y_i.
\end{equation*}
This quantity converges pointwise to
\begin{equation*}
\mathbbm{1}_{\left\{Y_i>0\right\}}\enspace \frac{b_{ij}(\boldsymbol{Y})}{M_i}\boldsymbol{\nabla} Y_j\cdot\boldsymbol{\nabla} Y_i
=\mathbbm{1}_{\left\{Y_i>0\right\}}\enspace \frac{a_{ij}(\boldsymbol{Y})}{M_iY_i}\boldsymbol{\nabla} Y_j\cdot\boldsymbol{\nabla} Y_i,
\end{equation*}
and the corresponding integrals converge by Lebesgue's theorem. Next if $i=j$, by \eqref{e2.11},
\begin{equation*}
\frac{a_{ii}(\boldsymbol{Y})}{M_i(Y_i+\eta)}|\boldsymbol{\nabla} Y_i|^2 
= \frac{b^0_i(\boldsymbol{Y})}{M_i(Y_i+\eta)}|\boldsymbol{\nabla} Y_i|^2 + \frac{b^1_i (\boldsymbol{Y})}{M_i} \frac{Y_i}{Y_i+\eta}|\boldsymbol{\nabla} Y_i|^2.
\end{equation*}
Similarly to above, we have
\begin{equation*}
\frac{b^1_i (\boldsymbol{Y})}{M_i} \frac{Y_i}{Y_i+\eta}|\boldsymbol{\nabla} Y_i|^2
\rightarrow\mathbbm{1}_{\left\{Y_i>0\right\}}\enspace \frac{b^1_i (\boldsymbol{Y})}{M_i}|\boldsymbol{\nabla} Y_i|^2,
\end{equation*}
hence the convergence of the integrals.  For the terms involving $b^0_i,$ we observe that $b^0_i(\boldsymbol{Y})\geq 0$ so that we can pass to the lower limit  by Fatou's Lemma and obtain:
\begin{equation}\label{e4.23a}
\begin{split}
\int^T_0\int_\Omega\mathbbm{1}_{\left\{Y_i>0\right\}}\enspace \frac{b^0_i(\boldsymbol{Y})}{M_i}\frac{|\boldsymbol{\nabla} Y_i|^2}{Y_i} dxds&=\\
&\leq \liminf_{\eta\rightarrow 0}\int^T_0\int_\Omega\mathbbm{1}_{\left\{Y_i>0\right\}}\enspace \frac{b^0_i(\boldsymbol{Y})}{M_i}\frac{|\boldsymbol{\nabla} Y_i|^2}{Y_i+\eta}dxds.
\end{split}
\end{equation}
In the context of the final a priori estimates below (collected estimates), \eqref{e4.23a} implies that its left hand-side is indeed integrable.

\vskip0.1in

Using again \eqref{e4.16fj}, the second  term in \eqref{e4.16fg} reads:
\begin{equation}\label{e4.16f}
\begin{split}
\sum^N_{i=1} \varepsilon & \int_0^T  \int_\Omega |\boldsymbol{\nabla}\boldsymbol{Y}|^{q-2}\boldsymbol{\nabla} Y_i\cdot\boldsymbol{\nabla} \mu^\eta_idxds=\\
&\sum^N_{i=1}\varepsilon \int_0^T\int_\Omega|\boldsymbol{\nabla} \boldsymbol{Y}|^{q-2}\frac{|\boldsymbol{\nabla} Y_i|^2}{M_i(Y_i+\eta)}dxds -\varepsilon \int_0^T\int_\Omega  |\boldsymbol{\nabla}\boldsymbol{Y}|^{q-2}\frac{|\boldsymbol{\nabla} Y_M|^2}{Y^\eta_M} dxds.
\end{split}
\end{equation}
We can easily pass to the limit $\eta\rightarrow 0$ by using Lebesgue's theorem in the second term. Concerning the first one, the integrand is positive, so we can take the lower limit using Fatou's Lemma so that :
\begin{equation}\label{e4.23b}
\begin{split}
\sum^N_{i=1} \varepsilon \int^T_0\int_\Omega|\boldsymbol{\nabla} \boldsymbol{Y}|^{q-2}&\mathbbm{1}_{\left\{Y_i>0\right\}}\enspace\frac{1}{M_i}\frac{|\boldsymbol{\nabla} Y_i|^2}{Y_i}dxds\\
&\leq\liminf_{\eta\rightarrow 0}\sum^N_{i=1} \varepsilon \int^T_0\int_\Omega |\boldsymbol{\nabla}\boldsymbol{Y}|^{q-2} \mathbbm{1}_{\left\{Y_i>0\right\}}\frac{|\boldsymbol{\nabla} Y_i|^2}{M_i(Y_i+\eta)}dxds.
\end{split}
\end{equation}
As for \eqref{e4.23a}, this eventually implies that the left hand side of \eqref{e4.23b} is integrable.

\vskip0.1in

By collecting all the results above we can pass to the lower limit in \eqref{e4.16fg} as $\eta \rightarrow 0$ and we obtain that:
\begin{equation}\label{e4.16i}
\begin{split}
\int_0^T\int_\Omega\ & \sum^N_{i,j=1}\mathbbm{1}_{\left\{Y_i>0\right\}}a_{ij}(\boldsymbol{Y})\boldsymbol{\nabla} Y_j\cdot\boldsymbol{\nabla}\mu_idxds \\
&+\varepsilon\int_0^T\int_\Omega |\boldsymbol{\nabla}\boldsymbol{Y}|^{q-2}\left[\sum^N_{i=1}\frac{|\boldsymbol{\nabla} Y_i|^2}{M_iY_i}\mathbbm{1}_{\left\{Y_i>0\right\}} -
\frac{|\boldsymbol{\nabla} Y_M|^2}{Y_M}\right]dxds \leq c_2.
\end{split}
\end{equation}

\vskip0.1in

\subsection{Passage to the limit $\varepsilon\rightarrow 0$}\label{ss4.3}\hspace*{\fill} \\
We first derive from \eqref{e4.16i} some estimates of the $Y_i=Y_{i,\varepsilon}$ that are independent of $\varepsilon$. Recalling \eqref{e2.8}, we observe that the first term in \eqref{e4.16i} is equal to
\begin{equation*}
-\sum^N_{i=1}\int_0^T\int_\Omega \textbf{F}_i\cdot\boldsymbol{\nabla} \mu_i\mathbbm{1}_{\left\{Y_i>0\right\}}dxds,
\end{equation*}
and thanks to \eqref{e2.12} it is bounded from below by
\begin{equation*}
c_1\int_0^T\int_\Omega|\boldsymbol{\nabla}\boldsymbol{Y}|^2 dxds.
\end{equation*}
Also we observe that the second term in \eqref{e4.16i} is positive because
\begin{equation*}
\begin{split}
\frac{|\boldsymbol{\nabla} Y_M|^2}{Y_M} &= \frac{1}{Y_M}\left|\sum^N_{j=1}\frac{\boldsymbol{\nabla} Y_j}{M_j}\mathbbm{1}_{\left\{Y_j>0\right\}}\right|^2\\
&\leq\frac{1}{Y_M}\left(\sum^N_{j=1}\frac{|\boldsymbol{\nabla} Y_j|^2}{M_jY_j}\mathbbm{1}_{\left\{Y_j>0\right\}}\right)\enspace
\left(\sum^N_{j=1}\frac{Y_j}{M_j}\right)\\
&\leq\sum^N_{j=1}\frac{|\boldsymbol{\nabla} Y_j|^2}{M_jY_j}\mathbbm{1}_{\left\{Y_j>0\right\}}.
\end{split}
\end{equation*}
With this \eqref{e4.16i} yields
\begin{equation}\label{e4.16j}
c_1\int_0^T\int_\Omega |\boldsymbol{\nabla}\boldsymbol{Y}|^2dxds\leq c_2,
\end{equation}
where $c_1$ and $c_2$ are independent of $\varepsilon$, so that:
\begin{equation}\label{e4.31}
\int_0^T\int_\Omega |\boldsymbol{\nabla}\boldsymbol{Y}|^2dxds \text{ is bounded independently of }\varepsilon.
\end{equation}
Going back to \eqref{e4.9}, \eqref{e4.31} together with \eqref{e4.10} guarantee that
\begin{equation}\label{e4.32c}
\varepsilon\int^{T}_0\int_\Omega |\boldsymbol{\nabla} \boldsymbol{Y}|^qdxds\text{ is bounded independently of }\varepsilon.
\end{equation}
\par
 Thanks to the estimates \eqref{e4.7}, \eqref{e4.31} and \eqref{e4.32c}, we can take the limit $\varepsilon\rightarrow 0$ in \eqref{e4.16} and obtain a weak solution of \eqref{e2.4}.  The details are standard.  This concludes the proof of Theorem \ref{t2.1}.

\section{The Full System}\label{s5}
\par
In this section, we investigate problem \eqref{e2.1}-\eqref{e2.4} and aim to prove Theorem \ref{t2.2}.  The $\boldsymbol{Y}-$ system is now coupled with the equations for $\boldsymbol{v}$ and $\theta.$  Clearly, in comparison with our study in Section \ref{s4}, the main new point is to derive estimates like \eqref{e2.28}, \eqref{e2.29} for $\boldsymbol{v}$ and $\theta.$  As in Section \ref{s4}, we derive such estimates for an appropriate modified problem, and then we take the limit $\varepsilon\rightarrow 0.$
\par
The $\boldsymbol{Y}-$ equations \eqref{e2.4} are modified as in Section \ref{s4} by considering \eqref{e4.4}.  Now, this system is coupled with
\begin{equation}\label{e5.1}
\frac{\partial\boldsymbol{v}}{\partial t} +(\boldsymbol{v}\cdot\boldsymbol{\nabla})-Pr\Delta\boldsymbol{v} +\boldsymbol{\nabla} p=\boldsymbol{e}_n\sigma\theta,
\end{equation}
\begin{equation}\label{e5.2}
\text{div }\boldsymbol{v} =0,
\end{equation}
\begin{equation}\label{e5.3}
\frac{\partial\theta}{\partial t}+(\boldsymbol{v}\cdot\boldsymbol{\nabla})\theta -\Delta\theta =-\sum^N_{i=1}h_i\omega_i(\theta, Y_1, \ldots , Y_N).
\end{equation}
As before, we show the existence of a solution of
\eqref{e5.1}-\eqref{e5.3},  \eqref{e4.4} (together with the appropriate initial and boundary conditions) thanks to the methods of compactness and monotonicity.  The useful a priori estimates derived hereafter are based on the fact that the $\omega_i$ are bounded independently of $\boldsymbol{Y}$ and $\theta,$ thanks to \eqref{e4.3}.  We first multiply \eqref{e5.3} by $\theta$ and integrate over $\Omega.$  This provides
\begin{equation*}
\begin{split}
\frac{1}{2}\frac{d}{dt}\int_\Omega \theta^2 dx +&\frac{1}{2} \int_{\Gamma_{h}} \theta^2 d\Gamma +\int_\Omega |\boldsymbol{\nabla}\theta |^2dx =- \int_\Omega \left(\sum^N_{i=1}h_i\omega_i(\theta, \boldsymbol{Y})\right)\theta dx\\
&\leq (\text{due to }\eqref{e4.3})\\
&\leq K_2 \left( \sum^N_{i=1}h_i \right) \int_\Omega |\theta| dx,
\end{split}
\end{equation*}
which yields readily that
\begin{equation}\label{e5.4}
\theta\text{ is bounded in }L^\infty (0,T; L^2(\Omega))\cap L^2(0,T; H^1(\Omega)).
\end{equation}
\par
Now, the right-hand side of \eqref{e5.1} is bounded in $L^\infty(0,T, L^2(\Omega)).$  Classical estimates for the two and three dimensional Navier-Stokes equations (see e.g. \cite{Tem77}) provide that
\begin{equation}\label{e5.5}
\boldsymbol{v}\text{ is bounded in }L^\infty (0,T, L^2(\Omega)^n)\cap L^2(0,T; H^1(\Omega)^n).
\end{equation}
In particular, the estimates corresponding to \eqref{e2.28} and \eqref{e2.29} have now been derived for $\boldsymbol{v}$ and $\theta.$  We can then proceed as in the proof of Proposition \ref{p4.2} and show that
\begin{equation}\label{e5.6}
\boldsymbol{Y}\text{ is bounded in }L^\infty (0,T, L^2 (\Omega)^N)\cap L^q(0,T; W^{1,q}(\Omega)^N).
\end{equation}
It follows easily from \eqref{e5.4}-\eqref{e5.6} that the system consisting of \eqref{e4.4} and \eqref{e5.1}-\eqref{e5.3} supplemented with the boundary conditions \eqref{e2.20}, \eqref{e2.21}, \eqref{e2.22}$_1$, \eqref{e4.4a} and the initial conditions \eqref{e2.24}, \eqref{e2.25}  possesses a solution $(\boldsymbol{v}, \theta,\boldsymbol{Y}).$  Also, \eqref{e4.10}, \eqref{e4.11} hold for the same reasons as before. Furthermore we have
\begin{equation}\label{e5.7}
\theta(x,t)\geq 0\text{ for }t\in[0,T]\text{ and a.e. }x\in\Omega .
\end{equation}
Indeed, multiplying \eqref{e5.3} by $-\theta^-=\min (0,\theta)$ and integrating over $\Omega,$ we obtain
\begin{equation}\label{e5.8}
\begin{split}
\frac{1}{2}\frac{d}{dt}\int_\Omega (\theta^-)^2 dx +&\frac{1}{2}\int_{\Gamma_{h}} (\theta^-)^2d\Gamma +\int_\Omega |\boldsymbol{\boldsymbol{\nabla}}\theta^-|^2dx\\
&=\int_\Omega\left(\sum^N_{i=1}h_i\omega_i(\theta, \boldsymbol{Y})\right)\theta^-dx.
\end{split}
\end{equation}
Due to the definition \eqref{e4.2} of $\omega_i$ and \eqref{e2.19}, at each point $(x,t)$ such that $\theta (x,t)\leq 0,$ we have
\begin{equation*}
\sum^N_{i=1}h_i\omega_i(\theta, \boldsymbol{Y})=\sum^N_{i=1}h_i\omega_i(0, \boldsymbol{Y})\leq 0,
\end{equation*}
and therefore
\begin{equation}\label{e5.9}
\int_\Omega \left(\sum^N_{i=1}h_i\omega_i(\theta, \boldsymbol{Y})\right)\theta^-dx\leq 0.
\end{equation}
Combining \eqref{e5.9} with \eqref{e5.8} enables us to show that
\begin{equation*}
\frac{d}{dt}\int_\Omega (\theta^-)^2dx\leq 0,
\end{equation*}
and thus to obtain \eqref{e5.7} since $\theta_0(x)\geq 0$ for almost every $x\in\Omega.$
\par
The modified system \eqref{e5.1}-\eqref{e5.3}, \eqref{e4.4}, \eqref{e4.4a} depends on a parameter $\varepsilon >0$ (in \eqref{e4.4} and \eqref{e4.4a}) and as in Section \ref{s4} we need to take the limit $\varepsilon\rightarrow 0.$  The estimates \eqref{e5.4} and \eqref{e5.5} are independent of $\varepsilon$ while $\boldsymbol{Y}$ can be estimated independently of $\varepsilon$ exactly as in Section \ref{s4}.  Based on these estimates it is easy to see that we can take the limit $\varepsilon\rightarrow 0$ in \eqref{e5.1}-\eqref{e5.3}, \eqref{e4.4}, \eqref{e4.4a} and obtain a weak solution of \eqref{e2.1}-\eqref{e2.4}.  The passage to the limit in the $\boldsymbol{Y}-$ equations is done as in Section \ref{s4}; the passage to the limit in the $\boldsymbol{v}$ and $\theta$ equations is standard.  The details are left to the reader.

\vskip0.1in

\section*{Acknowledgments}  This work was partially
supported by the National Science Foundation under the grant
NSF-DMS-1206438, and by the Research Fund of Indiana University.

\end{document}